\newtheorem{assumption}{Assumption}
\newcommand{\secref}[1]{Sec.~\ref{#1}}
\renewcommand{\eqref}[1]{Eq.~(\ref{#1})}
\newcommand{\defref}[1]{Def.~(\ref{#1})}
\newcommand{\lemref}[1]{Lemma~\ref{#1}}
\newcommand{\corref}[1]{Corollary~\ref{#1}}
\newcommand{\thmref}[1]{Thm.~\ref{#1}}
\newcommand{\propref}[1]{Proposition~\ref{#1}}
\newcommand{\appref}[1]{Appendix~\ref{#1}}
\newcommand{\assref}[1]{Assumption ~\ref{#1}}
\newcommand{\R}{\mathbb{R}}
\newcommand{\C}{\mathbb{C}}
\newcommand{\N}{\mathbb{N}}
\newcommand{\zero}{\boldsymbol{0}}
\newcommand{\abs}[1]{\left| #1 \right|}
\newcommand{\vertiii}[1]{{\left\vert\kern-0.25ex\left\vert\kern-0.25ex\left\vert #1 
    \right\vert\kern-0.25ex\right\vert\kern-0.25ex\right\vert}}
\renewcommand{\P}{\mathbb{P}}
\newcommand{\beq}{\begin{eqnarray*}}
\newcommand{\eeq}{\end{eqnarray*}}
\newcommand{\beqn}{\begin{eqnarray}}
\newcommand{\eeqn}{\end{eqnarray}}
\newcommand{\ent}[1][]{%
\ifthenelse{\isempty{#1}}{%
\mathrm{H}
}{
\mathrm{H}^{(#1)}
}}
\newcommand{\loch}[1][]{%
\ifthenelse{\isempty{#1}}{%
\mathrm{h}
}{
\mathrm{h}^{(#1)}
}}
\newcommand{\Ncal}{\mathcal{N}}
\newcommand{\bigo}{\mathcal{O}}
\newcommand{\E}{\mathbb{E}}
\newcommand{\Sphere}{\mathbb{S}}
\newcommand{\bx}{\mathbf{x}}
\newcommand{\x}{\mathbf{x}}
\newcommand{\bw}{\mathbf{w}}
\newcommand{\bu}{\mathbf{u}}
\newcommand{\bv}{\mathbf{v}}
\newcommand{\bz}{\mathbf{z}}
\newcommand{\norm}[1]{\left\|#1\right\|}
\begin{document}

\title{Simple Relative Deviation Bounds\\ for Covariance and Gram Matrices}
\author{\name Daniel Barzilai 
\email daniel.barzilai@weizmann.ac.il \\
\name Ohad Shamir 
\email ohad.shamir@weizmann.ac.il \\    
Weizmann Institute of Science
}

\editor{}

\maketitle

\begin{abstract}
We provide non-asymptotic, \emph{relative} deviation bounds for the eigenvalues of empirical covariance and Gram matrices in general settings. Unlike typical uniform bounds, which may fail to capture the behavior of smaller eigenvalues, our results provide sharper control across the spectrum. Our analysis is based on a general-purpose theorem that allows one to convert existing uniform bounds into relative ones. The theorems and techniques emphasize simplicity and should be applicable across various settings.
\end{abstract}

\begin{keywords}
  Relative Deviation, Matrix perturbation, High-Dimensional Statistics, Covariance, Non-asymptotic
\end{keywords}

\section{Introduction}

Many results in machine learning, statistics and other areas require controlling the eigenvalues of empirical covariance/Gram matrices. The goal of this paper is to provide \emph{non-asymptotic}, \emph{relative} deviation bounds, with an emphasis on generality and ease of use. By that, we mean that for random vectors $\bx_1,\ldots,\bx_n\in\R^d$, denoting $\hat{\Sigma}:=\frac{1}{n}\sum_{i=1}^n\bx_i \bx_i^\top\in\R^{d\times d}$ and $\Sigma:=\E\left[\hat{\Sigma}\right]\in\R^{d\times d}$, the bounds in this paper will be of the form 
\begin{align}\label{eq:example_statement}
\abs{\lambda_i\left(\hat\Sigma\right) - \lambda_i(\Sigma)} \leq C\cdot \lambda_i(\Sigma) \cdot \epsilon(n,d),
\end{align}
where $\epsilon(n,d)>0$ should be small, $C>0$ is some absolute constant, and $\lambda_i(\cdot)$ denotes the i'th largest eigenvalue of a matrix (where $\lambda_1\geq\lambda_2\geq\ldots$). There are, of course, mild conditions on $\bx_i$ which will be specified in the subsequent subsection.

This deviates from the typical bounds on $\lambda_i\left(\hat{\Sigma}\right)$ that are usually either \emph{uniform} \citep{rudelson1999random, vershynin2010introduction, adamczak2011sharp,tropp2012user,bunea2015sample, koltchinskii2017concentration, bandeira2023matrix, puchkin2023sharper, zhivotovskiy2024dimension, nakakita2024dimension} or \emph{asymptotic} \citep{marchenko1967distribution, baik2006eigenvalues, bai2008limit, feldheim2010universality, dornemann2023clt, atanasov2024scaling}. 
Uniform bounds typically control the spectral norm $\norm{\Sigma - \hat{\Sigma}}_2$ or the Frobenious norm $\norm{\Sigma - \hat{\Sigma}}_F$. 
These may be tight in bounding the largest eigenvalues of $\hat \Sigma$, but loose or even vacuous in bounding the smaller eigenvalues, especially when the spectral gap is large. 
For example, consider a case where $n, d$ are both large, and $\lambda_i\left(\Sigma\right) \lesssim \exp(-i)$. 
A uniform bound such as $\norm{\Sigma - \hat{\Sigma}}_2 \lesssim \norm{\Sigma}_2\sqrt{\frac{d}{n}}$ only tells us (via Weyl's inequality) that for every $i$, $\abs{\lambda_i\left(\Sigma\right) - \lambda_i\left(\hat\Sigma\right)} \lesssim \norm{\Sigma}_2\sqrt{\frac{d}{n}}$. But for most $i$ it holds that $\lambda_i\left(\Sigma\right)\ll \norm{\Sigma}_2\sqrt{\frac{d}{n}}$, so the uniform bound only ensures $\abs{\lambda_i\left(\hat\Sigma\right)}\lesssim \norm{\Sigma}_2\sqrt{\frac{d}{n}}$. This bound is, therefore, very loose when compared to a bound as in \eqref{eq:example_statement}. In particular, in such cases, uniform bounds cannot provide non-zero lower bounds for the smallest eigenvalues of $\hat{\Sigma}$, which is important for many applications.

In contrast to our bounds, asymptotic bounds characterize the limit distribution of the eigenvalues of $\hat\Sigma$ in the $n,d\to\infty$ limit when $\frac{d}{n}\to \gamma$ for some $\gamma\in(0,\infty)$. Unfortunately, it is generally difficult to convert such bounds into high-probability guarantees when $n$ and $d$ are finite \citep{vershynin2010introduction}. Furthermore, the convergence rate to the limit distribution may be relatively slow and depend on $\gamma$. Finally, the resulting bound is typically uniform and suffers from the same issues as mentioned before \citep{bai2010spectral}. Compared with these, our non-asymptotic bounds may be more precise for finite $n$ and $d$, do not require a fixed ratio between $n$ and $d$, are simpler, and should generally hold under weaker assumptions. The price we pay is that our bounds may be less precise in the limit when $\frac{d}{n}\to \gamma\in(0,\infty)$ due to the multiplicative constant $C>0$ in \eqref{eq:example_statement}. We therefore view these works as complementary.

For these reasons, relative and non-asymptotic bounds are critical in some applications and have therefore attracted attention in the literature by a series of excellent papers \citep{ipsen1998relative, ipsen2009refined, mas2015high, jirak2018relative, jirak2020perturbation, oliveira2016lower, ostrovskii2019affine, hucker2023note}. Many existing bounds have either required unnatural assumptions that are often not satisfied, primarily a large spectral gap (i.e lower bounds on $\max_{j\neq i}\abs{\lambda_i\left(\Sigma\right) - \lambda_j\left(\Sigma\right)}$). In contrast, our bounds make no assumptions on the eigenvalues $\lambda_i\left(\Sigma\right)$. Perhaps the most related results are those of \cite{barzilaigeneralization}, who developed relative bounds suited for distributions and applications that are specific to their analysis of high-dimensional kernel regression. This paper addresses more general and natural distributions, along with broader settings. \citet{oliveira2016lower} make just a mild fourth-moment assumption, but only provide a lower bound on $\lambda_i\left(\hat \Sigma\right)$ in the $d\leq n$ case. \citet{ostrovskii2019affine} provide bounds for $d\leq n$ and sub-gaussian distributions. They also provide bounds for heavy-tailed distributions when using a different estimator than the standard empirical covariance matrix. In contrast, our technique allows us to provide bounds for the empirical covariance matrix under mild distributional assumptions, as well as in high-dimensional settings ($d\geq n$).

Lastly, one of the main advantages of our bounds is simplicity, both of the bounds themselves and the techniques used. This simplicity does not come at the cost of tightness, as many of the bounds will be sharp up to multiplicative factors. The presentation in this paper assumes no specialized prior knowledge and should (hopefully) be generally accessible. 

\subsection{Reduction to Isotropic Random Vectors}\label{sec:isotropic}
Most theorems in this paper will consider the following standard setting:
\begin{assumption}\label{ass1}
     Let $X\in\R^{n\times d}$ be a matrix whose rows $\bx_1^\top,\ldots,\bx_n^\top$ are i.i.d. random vectors. Let
     $\Sigma:=\E\left[\frac{1}{n}X^\top X\right]\in\R^{d\times d}$, and assume that $\Sigma$ is invertible. Finally, let $\hat{\Sigma}:=\frac{1}{n}X^\top X\in\R^{d\times d}$.
\end{assumption}

We will often let $Z:=X\Sigma^{-\nicefrac{1}{2}}\in\R^{n\times d}$ and we note that \assref{ass1} implies that the rows $\bz_i$ of $Z$ are independent, \emph{isotropic} random vectors in $\R^d$, in the following sense:
\begin{definition}\label{def:isotropic}
    A random vector $\bz_i\in\R^d$ is said to be \emph{isotropic} if $\E\left[\bz_i\bz_i^\top\right]=I_d$. This is equivalent to saying that for any $v\in\R^d$, $\E[\langle \bz_i, v \rangle^2] = \norm{v}^2$.
\end{definition}

Indeed, it is straightforward to verify that $\bz_i=\Sigma^{-\nicefrac{1}{2}}\bx_i$ are isotropic, since $\E[\bz_i\bz_i^\top]=\E[\Sigma^{-\nicefrac{1}{2}}\bx_i\bx_i^\top \Sigma^{-\nicefrac{1}{2}}]=I_d$. Independence of $\bz_i$ follows directly from independence of $\bx_i$. In the subsequent subsections, we will reduce the task of providing relative deviation bounds as in \eqref{eq:example_statement} to uniform bounds on independent random vectors that are isotropic (i.e $\bz_i$).

\begin{remark}
    \assref{ass1} is slightly stronger than what we actually need for the subsequent theorems in the paper. In fact, it would suffice to assume that there exists some $Z:=[\bz_1, \ldots, \bz_n]^\top\in\R^{n\times d}$ such that $X= Z\Sigma^{\nicefrac{1}{2}}$ and the rows $\bz_i$ of $Z$ are independent, isotropic random vectors in $\R^d$. In this scenario, $\Sigma$ is not required to be invertible, and $\bx_i$ are not required to be identically distributed. Nevertheless, we opt to use $\assref{ass1}$ for improved clarity. Our main tool, \thmref{thm:gen_bound} is stated without \assref{ass1}, and therefore, extensions to settings beyond \assref{ass1} can easily be made.
\end{remark}

% In the subsequent subsections, we will reduce the task of providing relative deviation bounds as in \eqref{eq:example_statement} to uniform bounds on random vectors that are \emph{isotropic} in the following sense:
% \begin{definition}\label{def:isotropic}
%     A random vector $\bz_i\in\R^d$ is said to be \emph{isotropic} if $\E\left[\bz_i\bz_i^\top\right]=I_d$. This is equivalent to saying that for any $v\in\R^d$, $\E[\langle \bz_i, v \rangle^2] = \norm{v}^2$.
% \end{definition}

% More specifically, most theorems in this paper will assume the following setting:

% \begin{assumption}\label{ass1}
%      Let $X:=[\bx_1, \ldots, \bx_n]^\top\in\R^{n\times d}$ with $\Sigma:=\E\left[\frac{1}{n}X^\top X\right]\in\R^{d\times d}$, and $\hat{\Sigma}:=\frac{1}{n}X^\top X\in\R^{d\times d}$ and assume that there exists some $Z:=[\bz_1, \ldots, \bz_n]^\top\in\R^{n\times d}$ such that $X= Z\Sigma^{\nicefrac{1}{2}}$ and the rows $\bz_i$ of $Z$ are independent, isotropic random vectors in $\R^d$.
% \end{assumption}

% This is a very natural setting that, in particular, includes the special case where $\bx_i\in\R^d$ are i.i.d random vectors with an invertible second-moment matrix $\Sigma=\E[\bx_i \bx_i^\top]$. Indeed, in such a case, let $\bz_i:=\Sigma^{-\nicefrac{1}{2}}\bx_i$, then $\bz_i$ are independent and isotropic as needed, since $\E[\bz_i\bz_i^\top]=\E[\Sigma^{-\nicefrac{1}{2}}\bx_i\bx_i^\top \Sigma^{-\nicefrac{1}{2}}]=I_d$. 

\subsection{Setting and Preliminaries}
We will need to make some assumptions on $\bz_i:=\Sigma^{-\nicefrac{1}{2}}\bx_i$, upon which the strength of the results will of course depend. In particular, stronger results will be applicable when $\bz_i$ are \emph{sub-gaussian}.
\begin{definition}\label{def:sub_gaus}
    A random vector $\bz_i\in\R^d$ (or random variable when $d=1$) is said to be sub-gaussian if 
    \[
    \norm{\bz_i}_{\psi_2}:=\sup_{\bu:\norm{\bu}=1}
    \sup_{p\geq 1}\frac{1}{\sqrt{p}}\left(\E[\abs{\langle \bz_i, \bu\rangle}^p]^{\nicefrac{1}{p}}\right)<\infty.
    \]
\end{definition}
There are multiple equivalent ways to define sub-gaussian vectors. In particular, the above implies that for any $\bu$ with $\norm{\bu}=1$ and some constant $c>0$, $\E\left[\exp\left(c\frac{\langle \bz_i, \bu\rangle^2}{\norm{\langle \bz_i, \bu\rangle}_{\psi_2}^2}\right)\right]\leq e$ and for any $t\geq 0$, $\P\left(\abs{\langle \bz_i, \bu\rangle}\geq t\right)\leq \exp\left(1- c\frac{t^2}{\norm{\langle \bz_i, \bu\rangle}_{\psi_2}^2}\right)$ \citep{vershynin2010introduction}. Perhaps the two most prominent examples of sub-gaussian vectors are Gaussians and bounded random vectors, so all results stated for sub-gaussian vectors also hold for these cases.

We will also state results that do not require sub-gaussianity. Such results will require weaker conditions that will be made explicit in the relevant chapters. 

% We denote by $\hat{\Sigma}:=\frac{1}{n}X^\top X$ the empirical second-moment matrix for $X$, where it can readily be observed that $\E\left[\hat\Sigma\right]=\Sigma$. 
The results in this paper will be stated in terms of the eigenvalues of the empirical second-moment matrix,  $\lambda_i\left(\hat{\Sigma}\right)$, but clearly, these also naturally provide bounds for the Gram matrix $XX^\top\in \R^{n\times n}$, since for any matrix $X$, $\lambda_i\left(X^\top X\right)=\lambda_i\left(X X^\top\right)$ for all $i\leq \min(n,d)$. 

Our results are typically stated for real-valued vectors, but both the proof of \thmref{thm:gen_bound} as well as many of the results we rely on can naturally be extended to the complex numbers \citep{vershynin2010introduction}. Unless specified otherwise, $\norm{\cdot}=\norm{\cdot}_2$ will always denote the standard 2-norm for vectors, and the spectral norm (operator $2$-norm) for matrices. $I_n$ denotes the $n$-dimensional identity matrix. We use the standard big-O notation, and the $\tilde{\bigo}(\cdot)$ notation to hide additional logarithmic factors. For $n\in\N$, $[n]$ denotes the set $\{1,\ldots,n\}$.

\section{Main Results}
The main tool that will allow us to obtain relative deviation bounds is based on the following \propref{prop:ostrowski}, which can be viewed as a generalization of Ostrowski's theorem for non-square matrices. Interestingly, it is non-probabilistic and relies on linear algebra alone.

\begin{restatable}{proposition}{Ostrowski}\label{prop:ostrowski}
    Let $Z \in\C^{n\times d}$ for some $n,d\in\N$, and $0\preceq \Sigma\in\C^{d\times d}$ be p.s.d. Then for any $1\leq i\leq \min(n,d)$ it holds that
    \begin{align*}
        \lambda_{i+d-\min(n,d)}\left(\Sigma\right)\lambda_{\min(n,d)}\left(Z^{*}Z\right) \leq \lambda_i\left(\Sigma^{\nicefrac{1}{2}}Z^* Z \Sigma^{\nicefrac{1}{2}}\right) \leq \lambda_i(\Sigma)\lambda_1\left(Z^{*}Z\right).
    \end{align*}
\end{restatable}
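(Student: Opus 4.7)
The plan is to reduce the non-square, possibly rank-deficient problem to a square $k\times k$ one via the compact SVD of $Z$ (with $k := \min(n,d)$), and then invoke two standard tools: an Ostrowski-type sandwich for products $SAS$ of Hermitian PSD matrices, and the Cauchy--Poincar\'e interlacing theorem for Hermitian compressions. The index shift $i \mapsto i + d - \min(n,d)$ appearing on the left of the claim will emerge precisely from the interlacing step, so there is no real conceptual difficulty once the SVD reduction is in place.

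Concretely, I would first write the compact SVD $Z = U_1 S_1 V_1^*$, with $U_1 \in \C^{n\times k}$ and $V_1 \in \C^{d\times k}$ having orthonormal columns and $S_1 \in \C^{k\times k}$ diagonal PSD containing the singular values of $Z$. Then $Z^*Z = V_1 S_1^2 V_1^*$, so setting $M := \Sigma^{\nicefrac{1}{2}} V_1 S_1$ we have $\Sigma^{\nicefrac{1}{2}} Z^* Z \Sigma^{\nicefrac{1}{2}} = M M^*$. Since $MM^*$ and $M^*M$ share their nonzero eigenvalues, for every $1 \le i \le k$
\[
\lambda_i\bigl(\Sigma^{\nicefrac{1}{2}} Z^* Z \Sigma^{\nicefrac{1}{2}}\bigr) \;=\; \lambda_i(M^*M) \;=\; \lambda_i\bigl(S_1\, V_1^* \Sigma V_1\, S_1\bigr),
\]
which is now a $k\times k$ problem with two Hermitian PSD factors, $S_1$ and $A := V_1^* \Sigma V_1$.

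For this reduced problem, I would verify the Ostrowski-type sandwich
\[
\lambda_k(S_1^2)\, \lambda_i(A) \;\le\; \lambda_i\bigl(S_1 A S_1\bigr) \;\le\; \lambda_1(S_1^2)\, \lambda_i(A)
\]
by a one-line min-max argument applied to $A^{\nicefrac{1}{2}} S_1^2 A^{\nicefrac{1}{2}}$ (which shares its nonzero spectrum with $S_1 A S_1$), using $\lambda_k(S_1^2)\,I \preceq S_1^2 \preceq \lambda_1(S_1^2)\,I$; no invertibility of $S_1$ or $A$ is needed. Since $\lambda_j(S_1^2) = \lambda_j(Z^*Z)$ for $j \le k$, the extreme factors already match what the proposition requires. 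It remains to replace $A = V_1^* \Sigma V_1$ by $\Sigma$, which is the job of Cauchy--Poincar\'e interlacing: since $V_1^* V_1 = I_k$,
\[
\lambda_{i+d-k}(\Sigma) \;\le\; \lambda_i\bigl(V_1^* \Sigma V_1\bigr) \;\le\; \lambda_i(\Sigma), \qquad 1 \le i \le k.
\]
Plugging the upper interlacing bound into the upper Ostrowski estimate, and the lower into the lower, and recalling $k = \min(n,d)$, yields the proposition.

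The only step requiring real care is tracking indices in the interlacing bound: this is precisely the origin of the shift $i + d - \min(n,d)$ in the lower bound of the claim. Everything else is a short calculation, and no invertibility or spectral-gap hypothesis on $\Sigma$ or $Z$ enters anywhere.
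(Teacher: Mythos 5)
Your argument is correct, and it is genuinely different from the paper's. The paper proves the proposition by working with the matrix $Z\Sigma Z^*$ (which shares spectrum with $\Sigma^{\nicefrac{1}{2}} Z^* Z \Sigma^{\nicefrac{1}{2}}$) and invoking two subspace-counting lemmas of Dancis (1986): if $ZZ^*$ has $r$ eigenvalues above $a_1>0$ and $\Sigma$ has $s$ eigenvalues above $a_2>0$, then $Z\Sigma Z^*$ has at least $r+s-d$ eigenvalues above $a_1 a_2$ (and a mirror statement for the upper bound). These are proved directly from the Courant--Fischer characterization by intersecting a subspace of $\C^n$ with the preimage under $Z^*$ of a subspace of $\C^d$, and require a separate check of the degenerate case $\lambda_{\min(n,d)}(Z^*Z)=0$ or $\lambda_{i+d-\min(n,d)}(\Sigma)=0$. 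You instead factor the problem through the compact SVD $Z=U_1S_1V_1^*$, reduce to the $k\times k$ matrix $S_1(V_1^*\Sigma V_1)S_1$ (with $k=\min(n,d)$), and then split the estimate into two standard pieces: a Loewner-order sandwich $\lambda_k(S_1^2)\,A \preceq A^{\nicefrac{1}{2}}S_1^2A^{\nicefrac{1}{2}} \preceq \lambda_1(S_1^2)\,A$ with $A=V_1^*\Sigma V_1$ (using $\lambda_i(S_1AS_1)=\lambda_i(A^{\nicefrac{1}{2}}S_1^2A^{\nicefrac{1}{2}})$), followed by Cauchy--Poincar\'e interlacing to pass from $A$ back to $\Sigma$, which is exactly where the index shift $i\mapsto i+d-\min(n,d)$ comes from. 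Both routes ultimately rest on min--max, but yours is more modular (operator monotonicity plus interlacing, both textbook facts), avoids the case distinction for zero eigenvalues, and makes the source of the index shift transparent; the paper's route via Dancis is more hands-on with subspaces and is closer in spirit to how one would prove interlacing from scratch. Either is a valid self-contained proof.
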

The proposition is mostly built upon some manipulations of the Courant-Fischer Min-Max theorem due to \cite{dancis1986quantitative}, and a self-contained proof is deferred to \appref{ap:gen_bound}. Variants of this proposition appeared in \cite{braun2005spectral, barzilaigeneralization} in the context of kernel regression, as well as in \citep{ostrovskii2019affine} for obtaining relative deviation bounds with different estimators. The analogs of \propref{prop:ostrowski} in \cite{braun2005spectral, ostrovskii2019affine} are for $d < n$, and do not extend to nontrivial bounds when $d > n$. 

We will now bring \propref{prop:ostrowski} to a more convenient form yielding the following \thmref{thm:gen_bound}, which will serve as our main tool for proving relative deviation bounds in the remainder of the paper. We state the theorem for real-valued matrices for consistency with the remainder of the paper, but the same proof holds over $\C$.

\begin{restatable}{theorem}{GenBound}\label{thm:gen_bound}
Let $X, Z\in \R^{n\times d}$, and $\Sigma\in\R^{d\times d}$ be matrices such that $X= Z\Sigma^{\nicefrac{1}{2}}$ and $\hat{\Sigma}:=\frac{1}{n}X^\top X$.

\begin{enumerate}
    \item If $d\leq n$ then
    \begin{align*}
    \abs{\lambda_i\left(\hat\Sigma\right) - \lambda_i\left(\Sigma\right)} \leq \lambda_i\left(\Sigma\right)\norm{\frac{1}{n}Z^\top Z - I_d}_2.
    \end{align*}
    \item If $d \geq n$ then
    \begin{align*}
    \lambda_{i+d-n}\left(\Sigma\right)\left(1 - \norm{\frac{1}{d}ZZ^\top - I_n}_2\right) \leq \frac{n}{d}\cdot \lambda_i\left(\hat\Sigma\right) \leq \lambda_i\left(\Sigma\right)\left(1 + \norm{\frac{1}{d}ZZ^\top - I_n}_2\right).
    \end{align*}
\end{enumerate}
\end{restatable}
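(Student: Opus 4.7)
\medskip
\noindent\textbf{Proof plan for Theorem~\ref{thm:gen_bound}.}

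The plan is to derive both parts directly from Proposition~\ref{prop:ostrowski} applied to a suitable rescaling of $Z$. The key observation is that $X = Z\Sigma^{1/2}$ gives
\[
X^\top X \;=\; \Sigma^{1/2} Z^\top Z\, \Sigma^{1/2},
\]
so $\hat\Sigma = \Sigma^{1/2}\paren{\tfrac{1}{n} Z^\top Z}\Sigma^{1/2}$, which is precisely the type of product appearing in Proposition~\ref{prop:ostrowski}. A second fact I will use repeatedly is that for any symmetric $M$, the spectral norm $\nrm{M - I}_2$ equals $\max_j\abs{\lambda_j(M)-1}$, so in particular both $\lambda_{\max}(M)-1$ and $1-\lambda_{\min}(M)$ are bounded by $\nrm{M-I}_2$.

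For part (1), I would set $W := \tfrac{1}{\sqrt{n}}Z$ and apply Proposition~\ref{prop:ostrowski} with $W$ in place of $Z$. Since $d \le n$ we have $\min(n,d)=d$, and the index shift $i + d - \min(n,d)$ collapses to $i$. This yields
\[
\lambda_i(\Sigma)\,\lambda_d(W^\top W)\;\le\;\lambda_i(\hat\Sigma)\;\le\;\lambda_i(\Sigma)\,\lambda_1(W^\top W).
\]
Rearranging, $-\lambda_i(\Sigma)(1-\lambda_d(W^\top W)) \le \lambda_i(\hat\Sigma) - \lambda_i(\Sigma) \le \lambda_i(\Sigma)(\lambda_1(W^\top W)-1)$, and both $|1-\lambda_d(W^\top W)|$ and $|\lambda_1(W^\top W)-1|$ are dominated by $\nrm{W^\top W - I_d}_2 = \nrm{\tfrac{1}{n}Z^\top Z - I_d}_2$. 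This yields the claimed inequality.

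For part (2), since now $d\ge n$, I cannot work with $\tfrac{1}{n}Z^\top Z$ because Proposition~\ref{prop:ostrowski} would only give me a bound in terms of $\lambda_n(Z^\top Z)$, and the index shift in the lower bound becomes $i+d-n$. This is actually exactly what we want on the $\Sigma$ side. Applying Proposition~\ref{prop:ostrowski} directly to $Z$ and $\Sigma$ gives
\[
\lambda_{i+d-n}(\Sigma)\,\lambda_n(Z^\top Z)\;\le\;\lambda_i(X^\top X)\;=\;n\,\lambda_i(\hat\Sigma)\;\le\;\lambda_i(\Sigma)\,\lambda_1(Z^\top Z).
\]
Here I used that $\lambda_i(\Sigma^{1/2}Z^\top Z\Sigma^{1/2}) = \lambda_i(X^\top X)$. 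To convert to the $ZZ^\top$ form stated in the theorem, I invoke the standard fact that $Z^\top Z$ and $ZZ^\top$ share the same nonzero eigenvalues; in particular $\lambda_1(Z^\top Z)=\lambda_1(ZZ^\top)$ and $\lambda_n(Z^\top Z)=\lambda_n(ZZ^\top)$ (the latter even when $Z$ is rank-deficient, since both sides are zero). Dividing the whole chain by $d$ and using the spectral-norm bound on symmetric matrices as before,
\[
\tfrac{\lambda_1(ZZ^\top)}{d}\le 1 + \nrm{\tfrac{1}{d}ZZ^\top - I_n}_2, \qquad \tfrac{\lambda_n(ZZ^\top)}{d}\ge 1 - \nrm{\tfrac{1}{d}ZZ^\top - I_n}_2,
\]
yields the claim.

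The main obstacle is the bookkeeping: getting the index shift $i + d - \min(n,d)$ to land at exactly $i+d-n$ in part (2) and at $i$ in part (1), and arranging which side of Proposition~\ref{prop:ostrowski} one applies it to so that the spectral-norm bound involves $Z^\top Z$ (an $n$-fold average, case 1) versus $ZZ^\top$ (a $d$-fold average, case 2). Once the bookkeeping is right, the argument reduces to rearrangement and the symmetric-matrix identity for $\nrm{M-I}_2$.
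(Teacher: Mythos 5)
Your proposal is correct and follows essentially the same route as the paper: both reduce the claim to Proposition~\ref{prop:ostrowski} applied to the factorization $X^\top X = \Sigma^{1/2} Z^\top Z \Sigma^{1/2}$, and then convert the resulting multiplicative eigenvalue bounds into the stated form using the fact that for symmetric $M$ every $\lambda_j(M)$ lies in $[1-\nrm{M-I}_2,\,1+\nrm{M-I}_2]$ (which the paper frames as Weyl's inequality, Eq.~(\ref{eq:weyl})). The only cosmetic difference is where the normalizations $1/n$ and $1/d$ are inserted; your bookkeeping (including the index shift $i+d-n$ and the passage $\lambda_j(Z^\top Z)=\lambda_j(ZZ^\top)$ for $j\le n$) matches the paper's.
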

\begin{proof}
    Using Weyl's inequality, \citep{horn2012matrix}[Theorem 4.3.1] for any symmetric matrix $A$ it holds that
    \begin{align}\label{eq:weyl}
        1 - \norm{A - I}_2 \leq \lambda_i\left(A\right) \leq 1 + \norm{A - I}_2.
    \end{align}
    For $d\leq n$, \propref{prop:ostrowski} implies that
    \begin{align*}
    \lambda_{i}\left(\Sigma\right)\lambda_{d}\left(\frac{1}{n}Z^{\top}Z\right) \leq \lambda_i\left(\hat\Sigma\right) \leq \lambda_i(\Sigma)\lambda_1\left(\frac{1}{n}Z^{\top}Z\right).
    \end{align*}
    Bounding the eigenvalues of $\frac{1}{n}Z^{\top}Z$ using \eqref{eq:weyl} yields
    \begin{align*}
    \lambda_{i}\left(\Sigma\right)\left(1 - \norm{\frac{1}{n}Z^{\top}Z - I_d}_2\right)
    \leq \lambda_i\left(\hat\Sigma\right) 
    \leq \lambda_i(\Sigma)\left(1 + \norm{\frac{1}{n}Z^{\top}Z - I_d}_2\right).
    \end{align*}
    This is equivalent to what we needed to prove. 
    
    For the $d\geq n$ case, \propref{prop:ostrowski} combined with the fact that $\lambda_i\left(ZZ^\top\right)=\lambda_i\left(Z^\top Z\right)$ implies
    \begin{align*}
    \lambda_{i+d-n}\left(\Sigma\right)\lambda_{n}\left(\frac{1}{d}ZZ^{\top}\right) \leq \lambda_i\left(\frac{n}{d}\hat\Sigma\right) \leq \lambda_i(\Sigma)\lambda_1\left(\frac{1}{d}ZZ^{\top}\right).
    \end{align*}
    Again, the theorem follows by applying \eqref{eq:weyl} to $\frac{1}{d}ZZ^\top$.
\end{proof}

To see the utility of \thmref{thm:gen_bound}, consider the low-dimensional case ($d\leq n$) and rows $\bz_i$ of $Z$ that are independent, mean-zero isotropic random vectors. Then by \defref{def:isotropic}, their covariance matrix is $\E[\frac{1}{n}Z^{\top}Z]=I_d$, and one should thus expect $\norm{\frac{1}{n}Z^\top Z - I_d}_2$ to be small for sufficiently large $n$. Thus, \thmref{thm:gen_bound} reduces the task of deriving relative deviation bounds for $\hat \Sigma$ to the task of deriving uniform bounds $\norm{\frac{1}{n}Z^\top Z - I_d}_2$ for isotropic vectors. As mentioned in the introduction, uniform bounds for isotropic vectors have been the subject of many past works, and are generally well understood. The power of \thmref{thm:gen_bound} is allowing us to leverage these results to obtain relative bounds.

We note that the $\frac{n}{d}$ scaling in the bound of the high-dimensional case ($d\geq n$) is strictly necessary. This follows from the fact that $\hat\Sigma$ is scaled by $\frac{1}{n}$ and not $\frac{1}{d}$. Indeed, for $i\leq \min(n,d)$ it always holds that  $\lambda_i\left(\hat{\Sigma}\right)=\lambda_i\left(\frac{1}{n}X^\top X\right)=\frac{d}{n}\lambda_i\left(\frac{1}{d}XX^\top\right)$, so if, for example, the entries of $X$ are all i.i.d. with mean $0$ and variance $1$, one should expect $\frac{1}{d}XX^\top \approx I_n$. In this scenario, since $\Sigma=I_d$, we obtain $\lambda_i(\hat{\Sigma})\approx \frac{d}{n}\lambda_i(\Sigma)$.

\subsection{Low-Dimensional Case $(d \leq n)$}

In this section, we apply \thmref{thm:gen_bound} to obtain relative deviation bounds in the low-dimensional case, when $d\leq n$. The high dimensional case of $d\geq n$ will be treated in the following section. As mentioned in the previous section, thanks to \thmref{thm:gen_bound} it only remains to bound $\norm{\frac{1}{n}Z^\top Z - I_d}_2$ where the rows of $Z$ are isotropic. The following result from \cite{vershynin2018high} treats the case where the rows $\bz_i$ of $Z$ are mean-zero sub-gaussian:

\begin{theorem}
[\citet{vershynin2018high} Theorem 4.6.1]\label{thm:low_dim_isotropic}
    Let $Z:=[\bz_1, \ldots, \bz_n]^\top\in\R^{n\times d}$ be a matrix whose rows $\bz_i$ are independent, mean-zero, sub-gaussian isotropic random vectors in $\R^d$ with $K:=\max_i\norm{\bz_i}_{\psi_2}$. Then for some absolute constant $C >0$ and any $t\geq 0$ it holds w.p. at least $1-2\exp(-t^2)$
    \begin{align*}
        \norm{\frac{1}{n}Z^\top Z - I_d}_2\leq CK^2\max(\epsilon, \epsilon^2) \qquad\text{where}\qquad \epsilon:=\sqrt{\frac{d}{n}}+\frac{t}{\sqrt{n}}.
    \end{align*}
\end{theorem}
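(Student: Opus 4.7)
The plan is to follow the standard \emph{net plus Bernstein} argument. First, using the variational characterization of the spectral norm of a symmetric matrix, I rewrite
\[
\norm{\tfrac{1}{n}Z^\top Z - I_d}_2 \;=\; \sup_{\bu\in S^{d-1}} \abs{\tfrac{1}{n}\sum_{i=1}^n \langle \bz_i,\bu\rangle^2 - 1}.
\]
A standard approximation result (e.g.\ \citet{vershynin2018high}, Exercise 4.4.3) says that for any symmetric $A$ and any $\tfrac14$-net $\mathcal N$ of $S^{d-1}$, $\norm{A}_2 \leq 2\sup_{\bu\in\mathcal N}\abs{\bu^\top A\bu}$; such a net can be chosen with $\abs{\mathcal N}\leq 9^d$. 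This reduces the problem to controlling the deviation for a \emph{fixed} unit vector $\bu$, then taking a union bound over the net.

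For a fixed $\bu\in S^{d-1}$, set $Y_i := \langle \bz_i,\bu\rangle^2$. Isotropy gives $\E Y_i = \norm{\bu}^2 = 1$, and sub-gaussianity gives $\norm{\langle \bz_i,\bu\rangle}_{\psi_2}\leq K$, which implies $Y_i$ is sub-exponential with $\norm{Y_i - 1}_{\psi_1} \leq cK^2$ (squares of sub-gaussians are sub-exponential, see \citet{vershynin2018high} Lemma 2.7.6). Bernstein's inequality for i.i.d.\ sub-exponential random variables then gives, for every $u\geq 0$,
\[
\P\!\left(\abs{\tfrac{1}{n}\sum_{i=1}^n (Y_i - 1)} \geq u\right) \;\leq\; 2\exp\!\left(-c'\,n\cdot\min\!\left(\tfrac{u^2}{K^4},\tfrac{u}{K^2}\right)\right).
\]

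Now union-bound over $\bu\in\mathcal N$ and apply the net inequality with the factor of $2$, so that if I set $u = CK^2\max(\eps,\eps^2)/2$ for an appropriate absolute constant $C$, the right-hand side becomes at most $9^d\cdot 2\exp(-c' n\min(u^2/K^4,u/K^2))$. Plugging in $u$, the minimum equals $(\eps/2)^2$ when $\eps\leq 1$ and $\eps/2$ when $\eps\geq 1$, so in both regimes $n\min(\cdots) \gtrsim \eps^2 n = (\sqrt d + t)^2 \geq d + t^2$. Choosing $C$ large enough, $c' n\min(\cdot,\cdot) \geq \log(9)\cdot d + t^2 + \log 2$, which after combining with $9^d$ yields a bound of $2\exp(-t^2)$. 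This gives the claim with the required $\max(\eps,\eps^2)$ dependence.

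The calculation itself is routine once the ingredients are in place; the main point to handle carefully is the two-regime nature of Bernstein's inequality, which is exactly what produces the $\max(\eps,\eps^2)$ expression (the quadratic regime when $\eps\leq 1$ versus the linear regime when $\eps \geq 1$), and matching the $d$ and $t^2$ terms against the $9^d$ net cardinality in the union bound. The sub-exponential norm bound $\norm{Y_i-1}_{\psi_1}\lesssim K^2$ is the technical lemma that converts the sub-gaussian assumption on $\bz_i$ into something Bernstein can consume; everything else is a direct computation.
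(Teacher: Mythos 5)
Your proof is correct, and it is precisely the $\tfrac14$-net plus sub-exponential Bernstein argument that appears in the cited reference (\citet{vershynin2018high}, Theorem~4.6.1). The paper does not reprove this statement; it imports it verbatim as a black box, so there is no ``paper proof'' to compare against other than the source you reproduced. Two small remarks on precision rather than correctness. First, your claim that the Bernstein minimum ``equals $(\eps/2)^2$ when $\eps\le 1$ and $\eps/2$ when $\eps\ge 1$'' is not quite right as written: with $u=CK^2\max(\eps,\eps^2)/2$, the minimum of $u^2/K^4$ and $u/K^2$ involves the constant $C$ and does not always fall on the quadratic branch for $\eps\le 1$; what is true, and what the argument actually needs, is that in both regimes $n\min(u^2/K^4,u/K^2)\gtrsim C\,n\eps^2=C(\sqrt d+t)^2\ge C(d+t^2)$, which is the inequality you in fact invoke next. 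Second, the paper's version of the hypothesis is independence of the rows (with a uniform bound $K=\max_i\|\bz_i\|_{\psi_2}$), not i.i.d.; this is harmless since Bernstein's inequality holds for independent, not necessarily identically distributed, sub-exponential summands with a uniform $\psi_1$-bound, but you should state it that way so it matches the hypothesis being proved.
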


Combined with \thmref{thm:gen_bound}, we immediately obtain the following relative deviation bounds for $\hat \Sigma$:

\begin{restatable}[Low-Dimensional, Sub-Gaussian]{theorem}{SubGaussian}\label{thm:low_dim_sub_gaus}
    Under \assref{ass1} with $d\leq n$, assume further that $\bx_i$ are mean-zero, and that $\bz_i:=\Sigma^{-\nicefrac{1}{2}}\bx_i$ are sub-gaussian with $K:=\max_i\norm{\bz_i}_{\psi_2}$. Then for some absolute constant $C >0$ and any $t\geq 0$ it holds w.p. at least $1-2\exp(-t^2)$ that for all $i\in[d]$,
    \begin{align*}
        \quad \abs{\lambda_i\left(\hat\Sigma\right)- \lambda_i(\Sigma)} \leq CK^2 \lambda_i(\Sigma)\max\left(\epsilon, \epsilon^2\right) 
        \qquad \text{where}\qquad\epsilon:=\sqrt{\frac{d}{n}}+\frac{t}{\sqrt{n}}.
    \end{align*} 
\end{restatable}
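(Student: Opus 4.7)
The plan is to apply \thmref{thm:gen_bound} (the $d \leq n$ branch) to convert the eigenvalue perturbation problem into a uniform bound on $\|\frac{1}{n}Z^\top Z - I_d\|_2$, and then invoke \thmref{thm:low_dim_isotropic} directly. The work is essentially bookkeeping: verifying that the hypotheses of \thmref{thm:low_dim_isotropic} are met by $\bz_i := \Sigma^{-\nicefrac{1}{2}}\bx_i$.

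First I would note that \assref{ass1} plus the extra mean-zero hypothesis gives us everything we need about the $\bz_i$. Independence is immediate from the independence of the $\bx_i$, since $\bz_i$ is a deterministic linear image of $\bx_i$ (the map $\Sigma^{-\nicefrac{1}{2}}$ is the same for every $i$). Mean-zero likewise follows by linearity of expectation. Isotropy was already verified in the text after \defref{def:isotropic}: $\E[\bz_i \bz_i^\top] = \Sigma^{-\nicefrac{1}{2}} \E[\bx_i \bx_i^\top] \Sigma^{-\nicefrac{1}{2}} = \Sigma^{-\nicefrac{1}{2}} \Sigma \Sigma^{-\nicefrac{1}{2}} = I_d$ (using that the $\bx_i$ are identically distributed, so $\E[\bx_i \bx_i^\top] = \Sigma$). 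Sub-gaussianity with parameter $K$ is exactly our added assumption.

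With these hypotheses verified, \thmref{thm:low_dim_isotropic} yields, with probability at least $1 - 2\exp(-t^2)$,
\begin{align*}
    \norm{\tfrac{1}{n}Z^\top Z - I_d}_2 \leq CK^2 \max(\epsilon, \epsilon^2), \qquad \epsilon = \sqrt{\tfrac{d}{n}} + \tfrac{t}{\sqrt{n}}.
\end{align*}
Plugging this into the $d \leq n$ branch of \thmref{thm:gen_bound}, which gives
\begin{align*}
    \abs{\lambda_i(\hat\Sigma) - \lambda_i(\Sigma)} \leq \lambda_i(\Sigma) \norm{\tfrac{1}{n}Z^\top Z - I_d}_2
\end{align*}
for \emph{every} $i \in [d]$ on the same event, immediately produces the stated bound.

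There is no real obstacle here — the relative bound is obtained by chaining two previously stated results, and the key insight already lives in \thmref{thm:gen_bound}, which factored the perturbation into the multiplicative form $\lambda_i(\Sigma) \cdot (\text{uniform error})$. The only points to be careful about are (i) that the uniform bound on $\|\frac{1}{n}Z^\top Z - I_d\|_2$ holds simultaneously for all $i \in [d]$ since it is a statement about the operator norm of a single matrix, so no union bound over $i$ is needed, and (ii) that the absolute constant $C$ in the final statement is the same $C$ coming from Vershynin's theorem.
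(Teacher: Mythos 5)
Your proof is correct and is essentially the same as the paper's: both apply the $d \leq n$ branch of \thmref{thm:gen_bound} to reduce to a uniform bound on $\norm{\frac{1}{n}Z^\top Z - I_d}_2$, then verify the hypotheses of \thmref{thm:low_dim_isotropic} for $\bz_i = \Sigma^{-\nicefrac{1}{2}}\bx_i$ and plug the resulting bound back in. Your remark that no union bound over $i$ is needed (because the high-probability event concerns a single operator norm) is accurate and matches the paper's reasoning.
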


\begin{proof}
Let $Z:=X\Sigma^{-\nicefrac{1}{2}}\in\R^{n\times d}$  so that $Z:=[\bz_1, \ldots, \bz_n]^\top$. \thmref{thm:gen_bound} gives
\begin{align}\label{eq:proof1}
    \abs{\lambda_i\left(\hat\Sigma\right) - \lambda_i\left(\Sigma\right)} \leq \lambda_i\left(\Sigma\right)\norm{\frac{1}{n}Z^\top Z - I_d}_2.
\end{align}
As described in \secref{sec:isotropic}, $\bz_i$ are independent and isotropic. Furthermore, $\bz_i$ are also mean-zero as $\E[\bz_i]=\Sigma^{-\nicefrac{1}{2}}\E[\bx_i]=\zero$. So the conditions of \thmref{thm:low_dim_isotropic} hold, and applying this theorem to bound $\norm{\frac{1}{n}Z^\top Z - I_d}_2$ in \eqref{eq:proof1} completes the proof.
\end{proof}

Thus, $n=\bigo (K^4 d)$ samples suffice to obtain good relative deviation bounds. We note that if one needs only a bound on the \emph{largest} eigenvalues of $\hat\Sigma$, uniform deviation bounds may provide a better dependence on $d$ using some notion of an intrinsic dimension \citep{zhivotovskiy2024dimension}. \citet{ostrovskii2019affine} incorporated a notion of degrees of freedom in a relative bound, but nevertheless, for most eigenvalues in the spectrum, \thmref{thm:low_dim_sub_gaus} improves upon \citet{ostrovskii2019affine}[Eq. 12] by a $\log(d)$ factor. \thmref{thm:low_dim_sub_gaus} also improve upon \citet{hucker2023note}[Corollaries 2,3] who showed $\frac{\lambda_i(\Sigma)}{2} \leq \lambda_i\left(\hat{\Sigma}\right) \leq 2\lambda_i(\Sigma)$. Interestingly, the bounds for the isotropic case given by \thmref{thm:low_dim_isotropic} cannot be strengthened by more than a multiplicative constant, even if we assume that all entries of $\bz_i:=\Sigma^{-\nicefrac{1}{2}}\bx_i$ are i.i.d. Consider the special case where $\bx_i \sim \Ncal\left(0, \Sigma\right)$ for some invertible $\Sigma$ (namely, a zero-mean Gaussian distribution with covariance matrix $\Sigma$). It is well known that this implies that the entries of $\bz_i$ are i.i.d. standard Gaussian random variables, for which bounds on the singular values of $Z:=[\bz_1, \ldots, \bz_n]^\top$ are well known \citep{davidson2001local, vershynin2010introduction}. We thus have the following:

\begin{restatable}[Gaussian Entries]{theorem}{Gaussian}\label{thm:gaus}
    Consider the special case of \assref{ass1} with $d\leq n$ where $\bx_i \sim \Ncal\left(0, \Sigma\right)$. Then for any $t\geq 0$ it holds w.p. at least $1-2\exp(-\frac{t^2}{2})$ that for all $i\in[d]$,
    \begin{align*}
    \abs{\lambda_i\left(\hat\Sigma\right)- \lambda_i(\Sigma)} \leq \lambda_i(\Sigma)\left(2\epsilon + \epsilon^2\right) \qquad \text{where}\qquad\epsilon:=\sqrt{\frac{d}{n}}+\frac{t}{\sqrt{n}}.
    \end{align*} 
\end{restatable}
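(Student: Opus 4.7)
The plan is to follow the same template as the proof of \thmref{thm:low_dim_sub_gaus}: apply \thmref{thm:gen_bound} to reduce the relative deviation bound to a uniform bound on $\|\tfrac{1}{n}Z^\top Z - I_d\|_2$, where $Z := X\Sigma^{-\nicefrac{1}{2}}$. The key simplification in the Gaussian case is that $\bz_i := \Sigma^{-\nicefrac{1}{2}}\bx_i$ is not merely sub-gaussian with some abstract constant $K$, but is exactly a standard Gaussian vector $\Ncal(0, I_d)$. Consequently the entries of $Z$ are i.i.d.\ $\Ncal(0,1)$, and we can invoke the sharp Davidson--Szarek singular value bound instead of the generic sub-gaussian concentration in \thmref{thm:low_dim_isotropic}.

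First I would verify the reduction: since $\Sigma$ is invertible and $\bx_i \sim \Ncal(0,\Sigma)$, the vectors $\bz_i = \Sigma^{-\nicefrac{1}{2}}\bx_i$ are i.i.d.\ $\Ncal(0, I_d)$, so all $nd$ entries of $Z$ are i.i.d.\ standard Gaussian. Then I would quote the classical result (e.g.\ \citet{davidson2001local,vershynin2010introduction}): for $d\leq n$ and any $t\geq 0$, with probability at least $1 - 2\exp(-t^2/2)$,
\begin{equation*}
\sqrt{n} - \sqrt{d} - t \;\leq\; s_d(Z) \;\leq\; s_1(Z) \;\leq\; \sqrt{n} + \sqrt{d} + t.
\end{equation*}
Squaring (and using $s_d(Z)\geq 0$ to handle the case when the lower bound is negative) and dividing by $n$ yields, for every $i \in [d]$,
\begin{equation*}
\max(0,\,1-\epsilon)^2 \;\leq\; \lambda_i\!\left(\tfrac{1}{n}Z^\top Z\right) \;\leq\; (1+\epsilon)^2,
\qquad \epsilon := \sqrt{d/n} + t/\sqrt{n}.
\end{equation*}

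Next, I would convert this into a uniform bound on $\|\tfrac{1}{n}Z^\top Z - I_d\|_2 = \max_i |\lambda_i(\tfrac{1}{n}Z^\top Z) - 1|$. The upper side gives $|\lambda_i - 1|\leq (1+\epsilon)^2 - 1 = 2\epsilon + \epsilon^2$; the lower side gives $1 - \lambda_i \leq 1 - \max(0,1-\epsilon)^2$, which is $2\epsilon - \epsilon^2$ when $\epsilon \leq 1$ and at most $1 \leq 2\epsilon + \epsilon^2$ when $\epsilon > 1$. In all cases $\|\tfrac{1}{n}Z^\top Z - I_d\|_2 \leq 2\epsilon + \epsilon^2$. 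Finally, I would plug this into the $d\leq n$ branch of \thmref{thm:gen_bound}, which directly produces $|\lambda_i(\hat\Sigma) - \lambda_i(\Sigma)| \leq \lambda_i(\Sigma)(2\epsilon + \epsilon^2)$.

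There is no real obstacle here; the whole argument is essentially a substitution. The only minor bookkeeping point to be careful about is the $\epsilon > 1$ regime, where the Davidson--Szarek lower bound $\sqrt{n} - \sqrt{d} - t$ becomes negative and cannot be squared directly; handling this by using $s_d(Z) \geq 0$ and then observing $1 \leq 2\epsilon + \epsilon^2$ keeps the stated bound valid without any case split in the final statement.
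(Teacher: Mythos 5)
Your proof is correct and follows essentially the same route as the paper: reduce via \thmref{thm:gen_bound}, note the entries of $Z$ are i.i.d.\ standard Gaussian, apply the Davidson--Szarek singular-value bound (which is precisely \citet{vershynin2010introduction}[Corollary 5.35], the paper's cited source), and convert the squared bounds into $\norm{\tfrac{1}{n}Z^\top Z - I_d}_2 \leq 2\epsilon + \epsilon^2$. Your explicit handling of the $\epsilon>1$ regime via $s_d(Z)\geq 0$ is a small point of extra care that the paper's displayed step $(1-\epsilon)^2 \leq \lambda_i$ glosses over, but both arguments reach the identical final inequality.
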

\begin{proof}
    Let $Z:=X\Sigma^{-\nicefrac{1}{2}}\in\R^{n\times d}$, implying that the entries of $Z$ are i.i.d. standard Gaussians $\Ncal(0,1)$. By \citep{vershynin2010introduction}[Corollary 5.35] it holds with probability at least $1-2\exp(-\frac{t^2}{2})$ that for all $i\in[n]$, 
\begin{align}
    \left(1-\epsilon\right)^2\leq \lambda_i\left(\frac{1}{n}Z^\top Z\right) \leq \left(1+\epsilon\right)^2
    \qquad \text{where}\qquad\epsilon:=\sqrt{\frac{d}{n}}+\frac{t}{\sqrt{n}}.
\end{align}
Via Weyl's inequality the above implies that for all $i\in[n]$, $\lambda_i\left(\frac{1}{n}Z^\top Z - I_d\right)\leq 2\epsilon + \epsilon^2$, and thus $\norm{\frac{1}{n}Z^\top Z - I_d}_2\leq 2\epsilon + \epsilon^2$. Combining with \thmref{thm:gen_bound} concludes the proof.
\end{proof}

For the special case of Gaussian random vectors, the bounds in \thmref{thm:gaus} improve upon the bounds of \thmref{thm:low_dim_sub_gaus} in the sense that the constants are specified exactly. Nevertheless, the asymptotic dependence on the number of samples $n$ and the dimension $d$ remain the same.

\subsubsection{Bounds Without a Dependence on Sub-Gaussian Norm}
The dependence on $K$ in \thmref{thm:low_dim_sub_gaus} may be undesirable when the sub-gaussian norm is large relative to $d$. Consider for example the case when $\bz_i$ is distributed uniformly in the set $\{\sqrt{d}e_i\}_{i=1}^d$, where $e_i$ denote the standard basis vectors. It is straightforward to verify that $\norm{\bz_i}_{\psi_2} \gtrsim \sqrt{\frac{d}{\log(d)}}$ (for example, consider taking in \defref{def:sub_gaus} $u=e_1$ and $p=\log(d)$). In such a case, the bound in \thmref{thm:low_dim_sub_gaus} will exhibit a very poor dependence on $d$. To fix this, we derive an analog of \thmref{thm:low_dim_sub_gaus}, which depends on $\norm{\bz_i}_2$ instead of $\norm{\bz}_{\psi_2}$.

\begin{theorem}[Low-Dimensional, Bounded Norm] Under \assref{ass1} with $d\leq n$, let $m>0$ be a number s.t. $\bz_i:=\Sigma^{-\nicefrac{1}{2}}\bx_i$ satisfy $\norm{\bz_i}_2 \leq \sqrt{m}$ a.s. for all $i\in[n]$. Then for some absolute constant $c>0$ and any $t\geq0$, it holds w.p. at least $1-2d\exp(-ct^2)$ that for all $i\in[d]$,
    \begin{align*}
        \abs{\lambda_i\left(\hat\Sigma\right) - \lambda_i(\Sigma)} \leq \lambda_i(\Sigma)\max\left(\epsilon, \epsilon^2\right) \qquad\text{where}\qquad \epsilon:=t\sqrt{\frac{m}{n}}.
    \end{align*}
\end{theorem}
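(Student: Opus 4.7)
The plan is to apply \thmref{thm:gen_bound} to reduce the problem to a uniform bound on $\norm{\frac{1}{n}Z^\top Z - I_d}_2$, where $Z:=X\Sigma^{-\nicefrac{1}{2}}\in\R^{n\times d}$. By the discussion in \secref{sec:isotropic}, the rows $\bz_i$ of $Z$ are independent and isotropic, so $\frac{1}{n}Z^\top Z - I_d = \frac{1}{n}\sum_{i=1}^n W_i$ with $W_i:=\bz_i\bz_i^\top - I_d$ mean-zero and self-adjoint. The almost-sure norm bound $\norm{\bz_i}_2\leq \sqrt{m}$ is exactly what is needed to invoke the \emph{matrix Bernstein inequality}, so this is the tool I would use.

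To instantiate matrix Bernstein, I would compute two parameters. For the uniform bound on summands, $\norm{W_i}_2 \leq \max\bigl(\norm{\bz_i}_2^2,\,1\bigr) \leq m$ (isotropy forces $d=\E\norm{\bz_i}^2\leq m$, so $m\geq 1$). For the matrix variance, $\E W_i^2 = \E[\norm{\bz_i}^2\bz_i\bz_i^\top] - I_d$, and the a.s. bound $\norm{\bz_i}^2\leq m$ yields $\E[\norm{\bz_i}^2\bz_i\bz_i^\top] \preceq m\,\E[\bz_i\bz_i^\top] = mI_d$. Hence $\bigl\|\sum_{i=1}^n \E W_i^2\bigr\|_2 \leq nm$. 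Matrix Bernstein (see, e.g., Tropp's user-friendly bounds, or Vershynin's monograph) then gives, for any $s>0$,
\[
\P\!\left(\norm{\tfrac{1}{n}Z^\top Z - I_d}_2 \geq s\right) \;\leq\; 2d\exp\!\left(-\,\frac{c'\,n s^2/m}{1+s}\right)
\]
for an absolute constant $c'>0$.

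To finish, substitute $s:=\max(\epsilon,\epsilon^2)$ with $\epsilon=t\sqrt{m/n}$. In the regime $\epsilon\leq 1$, we have $s=\epsilon$, $s\leq 1$, and $ns^2/m = t^2$; in the regime $\epsilon\geq 1$, $s=\epsilon^2$, and $ns^2/(m(1+s)) \geq ns/(2m) = t^2/2$. Either way the exponent is at most $-c t^2$ for a suitable absolute $c>0$, giving the failure probability $2d\exp(-ct^2)$. Plugging this uniform bound into the first part of \thmref{thm:gen_bound} (the case $d\leq n$) yields the stated relative bound simultaneously for every $i\in[d]$.

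The only nontrivial step is the variance calculation, and even that is essentially one line once one notices that the almost-sure norm bound can be pulled out of the expectation. The rest is a standard application of matrix Bernstein; indeed, an equivalent shortcut would be to cite directly a packaged corollary for empirical covariance of bounded isotropic vectors (e.g.\ \citet{vershynin2010introduction}, Theorem 5.41), which would collapse the proof into a two-line chain.
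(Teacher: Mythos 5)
Your proof is correct and matches the paper's approach: the paper simply cites \citet{vershynin2010introduction}[Theorem~5.41] in place of \thmref{thm:low_dim_isotropic} and combines it with the first part of \thmref{thm:gen_bound}. Your matrix-Bernstein derivation is just an unpacking of that theorem's proof, and you correctly identify the citation shortcut at the end, which is exactly what the paper does.
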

The proof is analogous to that of \thmref{thm:low_dim_sub_gaus}, where the only difference is that $\norm{\frac{1}{n}Z^\top Z - I_d}_2$ is bounded using \cite{vershynin2010introduction}[Theorem 5.41] instead of \thmref{thm:low_dim_isotropic}. By the definition of isotropic vectors, $\E\left[\bz_i\right]=\sqrt{d}$ and as such, it always holds that $m\geq d$. Furthermore, in order for the theorem to hold with probability at least $1-\delta$ for some $\delta>0$, one would have to take $t\geq\sqrt{\frac{\log\left(\frac{2d}{\delta}\right)}{c}}$, introducing an additional $\log(d)$ factor. This means that in the heavy-tailed case, $n$ has to be on the order of $m\log(d)$, which is at least $d\log(d)$. This dependence on $d$ is weaker than the dependence needed in \thmref{thm:low_dim_sub_gaus} by a $\log(d)$ factor when $K=O(1)$, but is stronger when $K\gtrsim \log\left(d\right)^{\nicefrac{1}{4}}$. Nevertheless, this $\log(d)$ factor cannot be removed without further assumptions (see the discussion after the proof of Thm. 5.41 in \cite{vershynin2010introduction}).

\subsubsection{Variants and Extensions}

There are many other possible bounds on the eigenvalues of $\frac{1}{n}Z^\top Z$ that together with \thmref{thm:gen_bound} can yield relative deviation bounds beyond those presented in the previous section \citep{rudelson2010non}. 
% The following is a partial list of a few possibilities. 
% The bounds above are tight for rectangular matrices, but the case of $n\approx d$ is better treated by the bounds of \cite{rudelson2009smallest}. Furthermore, 
For example,
\cite{koltchinskii2015bounding, yaskov2014lower, yaskov2015sharp} provide lower bounds on the smallest eigenvalues of $\frac{1}{n}Z^\top Z$ when the rows $\bz_i$ have finite $2+\eta$ moments (for any $\eta>0$). For $\eta > 2$, the bounds match those of \thmref{thm:low_dim_isotropic} up to constants which depend on the moments. Bounds that apply also to the largest eigenvalues under similar $2+\eta$ moment assumptions are given in \citet{mendelson2014singular, guedon2017interval, tikhomirov2018sample}. 

\subsection{High-Dimensional Case ($d \geq n$)}

We now derive analogs of the theorems in the previous section in the high-dimensional ($d\geq n$) case. In such a case, $\lambda_i\left(\hat{\Sigma}\right)=0$ for $i>n$ and we therefore concern ourselves only with the first $n$ eigenvalues. The simplest case is if the entries $z_{ij}$ of $Z:=X\Sigma^{-\nicefrac{1}{2}}$ are independent and mean-zero, unit-variance sub-gaussian, with $\norm{z_{ij}}_{\psi_2} \leq K$ for some $K>0$. In such a case, \thmref{thm:low_dim_isotropic} can be used with $Z^\top$ instead of $Z$, reversing the roles of $n$ and $d$.

\begin{theorem}[High-Dimensional, Sub-Gaussian Entries]\label{thm:high_dim_sub_gaus}
    Under \assref{ass1} with $d\geq n$, assume further that $\bx_i$ are mean-zero, and that the entries $z_{ij}$ of $\bz_i:=\Sigma^{-\nicefrac{1}{2}}\bx_i$ are independent, sub-gaussian random variables with variance $1$ and sub-gaussian norm $\norm{z_{ij}}_{\psi_2} \leq K$ for all $i\in[n],j\in[d]$. Then for some absolute constant $C >0$ and any $t\geq 0$ it holds w.p. at least $1-2\exp(-t^2)$ that for all $i\in[n]$,
    \begin{align*}
    \lambda_{i+d-n}\left(\Sigma\right)\left(1 - CK^2\max\left(\epsilon, \epsilon^2\right)\right) \leq \frac{n}{d}\cdot \lambda_i\left(\hat\Sigma\right) \leq \lambda_i\left(\Sigma\right)\left(1 + CK^2\max\left(\epsilon, \epsilon^2\right)\right),
    \end{align*}
    where $\epsilon:=\sqrt{\frac{n}{d}}+\frac{t}{\sqrt{d}}$.
\end{theorem}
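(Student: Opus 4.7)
My plan is to mirror the proof of \thmref{thm:low_dim_sub_gaus}, but this time apply the isotropic uniform bound to the transpose $Z^{\top}$ so that the roles of $n$ and $d$ are swapped, and then invoke part~2 of \thmref{thm:gen_bound} instead of part~1.

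First, I would set $Z := X\Sigma^{-\nicefrac{1}{2}} \in \R^{n\times d}$ and view it as the $d \times n$ matrix $Z^{\top}$. The $d$ rows of $Z^{\top}$ are precisely the columns of $Z$. Under the hypothesis, the entries $z_{ij}$ are mutually independent across both $i$ (by \assref{ass1}) and $j$ (by assumption), mean-zero, unit-variance, and sub-gaussian with $\norm{z_{ij}}_{\psi_2} \leq K$. It is a standard fact that a random vector in $\R^n$ with independent mean-zero unit-variance sub-gaussian coordinates is itself isotropic and sub-gaussian, with vector sub-gaussian norm bounded by $C'K$ for an absolute constant $C'>0$. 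Thus the $d$ rows of $Z^{\top}$ are independent, mean-zero, sub-gaussian, isotropic random vectors in $\R^n$.

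Since $n \leq d$, I can then apply \thmref{thm:low_dim_isotropic} directly to $Z^{\top}$, where the role of the sample size is played by $d$ and the role of the ambient dimension by $n$. This yields, w.p. at least $1 - 2\exp(-t^2)$,
\begin{align*}
\norm{\tfrac{1}{d}ZZ^{\top} - I_n}_2 \;\leq\; C K^2 \max(\eps, \eps^2), \qquad \eps := \sqrt{\tfrac{n}{d}} + \tfrac{t}{\sqrt{d}},
\end{align*}
after absorbing the constant $C'$ into $C$. Plugging this uniform bound into part~2 of \thmref{thm:gen_bound} immediately gives the stated two-sided inequality on $\tfrac{n}{d}\cdot\lambda_i(\hat\Sigma)$ for all $i \in [n]$.

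There is essentially no hard step. The only point that deserves a line of justification is verifying that the columns of $Z$ are sub-gaussian \emph{as vectors} (not merely entry-wise) and isotropic; this reduces to the standard tensorization lemma for sub-gaussian norms under independence and to the fact that independent unit-variance coordinates produce an identity covariance. Everything else is bookkeeping: matching the probability level $1-2\exp(-t^2)$, recording $\eps$ with $n$ and $d$ swapped relative to \thmref{thm:low_dim_sub_gaus}, and quoting \thmref{thm:gen_bound}.
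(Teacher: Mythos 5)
Your proposal is correct and follows essentially the same route as the paper: factor out $Z := X\Sigma^{-\nicefrac{1}{2}}$, observe that the columns of $Z$ (rows of $Z^{\top}$) are independent, mean-zero, isotropic, sub-gaussian vectors in $\R^n$ (the paper cites Vershynin's Lemma 5.24 for the entry-wise--to--vector sub-gaussian step you label as the tensorization lemma), apply \thmref{thm:low_dim_isotropic} to $Z^{\top}$ with the roles of $n$ and $d$ swapped, and feed the resulting bound on $\lVert\tfrac{1}{d}ZZ^{\top}-I_n\rVert_2$ into part 2 of \thmref{thm:gen_bound}.
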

\begin{proof}
Again, let $Z:=X\Sigma^{-\nicefrac{1}{2}}\in\R^{n\times d}$, then \thmref{thm:gen_bound} gives
\begin{align}\label{eq:proof2}
    \lambda_{i+d-n}\left(\Sigma\right)\left(1 - \norm{\frac{1}{d}ZZ^\top - I_n}_2\right) \leq \frac{n}{d}\cdot \lambda_i\left(\hat\Sigma\right) \leq \lambda_i\left(\Sigma\right)\left(1 + \norm{\frac{1}{d}ZZ^\top - I_n}_2\right).
\end{align}
Now let $\tilde{\bz}_1, \ldots, \tilde{\bz}_d$ denote the rows of $Z^\top$ (instead of $Z$) s.t. $Z=[\tilde{\bz}_1, \ldots, \tilde{\bz}_d]$. $\tilde \bz_i$ are independent by assumption, and mean-zero as $\E[Z]=\E[X]\Sigma^{-\nicefrac{1}{2}}=\zero$. Since the entries of $\tilde{\bz}_i$ are independent, mean-zero, and have variance $1$, $\tilde{\bz}_i$ have unit covariance and are therefore isotropic. Furthermore, as the entries $z_{ij}$ are sub-gaussian, by \cite{vershynin2010introduction}[Lemma 5.24], $\tilde{\bz}_i$ are sub-gaussian random vectors with $\norm{\tilde{\bz}_i}_{\psi_2} \leq \tilde CK$ for some constant $\tilde C>0$.

Thus, \thmref{thm:low_dim_isotropic} can be used with $Z^\top$ instead of $Z$ (where the roles of $n$ and $d$ are switched), and we obtain that with probability at least $1-2\exp(-t^2)$,
\begin{align}
    \norm{\frac{1}{d}Z Z^\top - I_n}_2\leq CK^2\max(\epsilon, \epsilon^2) \qquad\text{where}\qquad \epsilon:=\sqrt{\frac{n}{d}}+\frac{t}{\sqrt{d}}.  
\end{align}
This together with \eqref{eq:proof2} completes the proof.
\end{proof}

We note that as discussed after the proof of \thmref{thm:gen_bound}, the $\frac{n}{d}$ factor is not a weakness of our bounds, but rather a necessary re-scaling due to the fact that $\hat{\Sigma}$ is scaled by $\frac{1}{n}$ instead of $\frac{1}{d}$. We now move on to the case where the rows $\bz_i$ are independent, but not necessarily the entries $z_{ij}$ of $Z$. In this case, our results will require the assumption that $\norm{\bz_i}$ is constant, in which case it must equal $\sqrt{d}$ (as by \defref{def:isotropic}, $\E[\bz_i\bz_i^\top]=I_d$). This equality condition is, of course, more restrictive than the ones in the previous theorems. Nevertheless, it allows us to obtain the following results:

\begin{theorem}[High-Dimensional, Independent Vectors]\label{thm:high_dim}
        Under \assref{ass1} with $d\geq n$, assume further that $\bz_i:=\Sigma^{-\nicefrac{1}{2}}\bx_i$ satisfy $\norm{\bz_i}=\sqrt{d}$ a.s. for
        every $i\in [n]$. 
    \begin{enumerate}
        \item If $\bz_i$ are also sub-gaussian, then for some constants $C_K,c_K>0$ which depend only on the sub-gaussian norm $K=\max_i\norm{\bz_i}_{\psi_2}$, $\epsilon:=C_K\sqrt{\frac{n}{d}} + \frac{t}{\sqrt{d}}$, and any $t\geq 0$, it holds w.p. at least $1-2\exp(-c_Kt^2)$ that for all $i\in[n]$,
        \begin{align}\label{eq:high_dim1}
        \lambda_{i+d-n}\left(\Sigma\right)\left(1 - \max\left(\epsilon, \epsilon^2\right)\right) \leq \frac{n}{d}\cdot \lambda_i\left(\hat\Sigma\right) \leq \lambda_i\left(\Sigma\right)\left(1 + \max\left(\epsilon, \epsilon^2\right)\right).
        \end{align}
        \item For any $p\in\N$ let $K(p) := \max_{i\in[n]}\sup_{x\in\Sphere^{d-1}}\E_{\bz_i}[\abs{\langle \bz_i, x \rangle}^{p}]^{\frac{1}{p}}$, then for all $i\in[n]$,
        \begin{align}\label{eq:high_dim2}
        \lambda_{i+d-n}(\Sigma)\left(1- \epsilon\right) \leq \frac{n}{d}\E\left[\lambda_i\left(\hat\Sigma\right)\right] \leq \lambda_i(\Sigma)\left(1+ \epsilon \right).
        \end{align}
        where $\epsilon=\sqrt{\frac{B(n,p)}{d}}$ with
        \begin{align*}
        B(n, p) := C\frac{p}{\log(p+1)}n^{\frac{1}{p}}\max\left(n, n^\frac{1}{p}K(2p)^2\right) \log(n)
        \end{align*}
        for some absolute constant $C>0$. 
        % In particular, if $K(p)=\bigo(\poly(p))$ for every $p\in\N$, then taking $p = \lceil\log(n)\rceil$ yields $B(n,p)\leq\tilde{O}\left(n\right)$.
    \end{enumerate}
\end{theorem}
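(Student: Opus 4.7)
The plan is to invoke Theorem~\ref{thm:gen_bound} with $Z:=X\Sigma^{-\nicefrac{1}{2}}\in\R^{n\times d}$, which reduces both parts to controlling the spectral norm $\norm{\frac{1}{d}ZZ^\top - I_n}$. Observe that under the norm assumption $\norm{\bz_i}=\sqrt{d}$ a.s., the diagonal entries of $\frac{1}{d}ZZ^\top$ are identically $1$, and the off-diagonal entries $\frac{1}{d}\langle\bz_i,\bz_j\rangle$ for $i\neq j$ have mean zero by independence of the rows together with isotropy. Unlike the setup of Theorem~\ref{thm:high_dim_sub_gaus}, we cannot apply Theorem~\ref{thm:low_dim_isotropic} to $Z^\top$ here because the columns of $Z$ (i.e., the rows of $Z^\top$) are not assumed independent, so a direct argument on the row-Gram matrix is needed.

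For part~1, I would apply a known concentration bound for the operator norm of the Gram matrix of $n$ independent isotropic sub-gaussian vectors constrained to the sphere of radius $\sqrt{d}$. The spectral norm can be written variationally as
\[
\norm{\tfrac{1}{d}ZZ^\top - I_n} \;=\; \sup_{u\in\Sphere^{n-1}}\Bigl|\tfrac{1}{d}\sum_{i\neq j}u_i u_j \langle \bz_i,\bz_j\rangle\Bigr|,
\]
which is a sub-gaussian chaos in the independent vectors $\bz_1,\ldots,\bz_n$. A standard $\epsilon$-net argument on $\Sphere^{n-1}$, combined with a Hanson-Wright type tail bound for a fixed $u$ (obtained by conditioning on one factor and using the sub-gaussian tail of $\langle\bz_j,\cdot\rangle$ for the other), yields $\norm{\frac{1}{d}ZZ^\top - I_n}\leq \max(\epsilon,\epsilon^2)$ with $\epsilon=C_K\sqrt{n/d}+t/\sqrt{d}$ with probability at least $1-2\exp(-c_Kt^2)$. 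Plugging this into Theorem~\ref{thm:gen_bound} produces~\eqref{eq:high_dim1}.

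For part~2, since the statement bounds $\E[\lambda_i(\hat\Sigma)]$ rather than a high-probability quantity, I would take expectations throughout Theorem~\ref{thm:gen_bound} and control $\E\norm{\frac{1}{d}ZZ^\top - I_n}$ using only the moment assumption $K(p)$. The natural tool is a moment inequality for the operator norm of the zero-diagonal, zero-mean symmetric matrix with entries $\frac{1}{d}\langle\bz_i,\bz_j\rangle$: via symmetrization and a Rosenthal / non-commutative Khintchine inequality, the $p$-th moment of this operator norm is bounded in terms of $K(2p)$, and Jensen's inequality $\E\norm{\cdot}\leq (\E\norm{\cdot}^p)^{1/p}$ then converts this to a bound on the expectation. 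The factors $n^{1/p}$, $\log(n)$, and $\frac{p}{\log(p+1)}$ in $B(n,p)$ arise, respectively, from the $p$-to-$1$ moment conversion, from the $\epsilon$-net / chaining cost on $\Sphere^{n-1}$, and from the scaling of the optimal moment order in the Khintchine-type step.

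The main obstacle is part~2: assembling a moment-based bound with the precise $B(n,p)$ form requires a careful combination of decoupling (to reduce the bilinear form $\sum_{i\neq j} u_iu_j\langle\bz_i,\bz_j\rangle$ to sums conditionally linear in one factor), a non-commutative Khintchine step to pass to operator norm, and tracking the optimal choice of $p$. Part~1, by contrast, is comparatively standard once one recognizes the chaos structure of $\norm{\frac{1}{d}ZZ^\top - I_n}$ and leverages existing sub-gaussian concentration results for Gram matrices.
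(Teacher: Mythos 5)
For part~1 your route is essentially the paper's: the paper simply quotes Vershynin's Gram-matrix concentration result (Theorem~5.58 of \citealp{vershynin2010introduction}) for independent isotropic sub-gaussian rows with $\norm{\bz_i}=\sqrt{d}$ and feeds it into \thmref{thm:gen_bound}; you are proposing to re-derive that same bound by a decoupling\,+\,net argument, which is fine but unnecessary. One caution: a Hanson-Wright argument directly on the bilinear form is delicate here because the off-diagonal entries $\langle\bz_i,\bz_j\rangle$ are only sub-exponential, and the almost-sure normalization $\norm{\bz_i}=\sqrt{d}$ is what keeps the conditional sub-gaussian norms under control -- this is precisely how the cited theorem exploits the hypothesis, so be sure to use it.

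For part~2 there is a genuine gap. The paper does \emph{not} prove a direct operator-norm moment bound via symmetrization and non-commutative Khintchine. It instead passes through the \emph{incoherence parameter}
\[
m := \frac{1}{d}\,\E\Bigl[\max_{j\leq n}\sum_{k\in[n],\,k\neq j}\langle\bz_j,\bz_k\rangle^2\Bigr],
\]
using Vershynin's Theorem~5.62, which gives $\E\norm{\tfrac1d ZZ^\top - I_n}_2 \leq C\sqrt{m\log n / d}$ (this is where $\log n$ enters), and then bounds $m$ by a purely \emph{scalar} argument: a max-over-$n$ estimate $\E[\max_i Y_i]\leq n^{1/p}\max_i\E[Y_i^p]^{1/p}$ (this is the true source of $n^{1/p}$, not a ``$p$-to-$1$ moment conversion'' via Jensen, which would not produce an $n^{1/p}$) followed by Rosenthal's inequality for $p$-th moments of sums of independent nonnegative random variables (this, not a Khintchine step, is where $p/\log p$ comes from), and finally $\E[\langle\bz_i,\bz_j\rangle^{2p}]^{1/p}\leq K(2p)^2 d$ by conditioning and using $\norm{\bz_i}=\sqrt{d}$. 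Your reverse-engineered attributions of the three factors in $B(n,p)$ are therefore misassigned, and you have not identified the intermediate quantity $m$ around which the proof is actually organized. A symmetrization\,+\,NCK argument on the hollow Gram matrix might eventually be made to work, but you have not checked that it reproduces $B(n,p)$, and as written the sketch would need to overcome the dependence between columns of $Z$ (only the rows are independent), which is exactly the obstruction that the incoherence-parameter route is designed to sidestep.
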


The proof of these bounds, as usual, involves bounding $\norm{\frac{1}{d}Z Z^\top - I_n}_2$ and then using \thmref{thm:gen_bound}. The first part of the theorem, \eqref{eq:high_dim1}, is relatively straightforward and handled by \cite{vershynin2010introduction}[Theorem 5.58]. The second part, \eqref{eq:high_dim2}, which depends on the moment bounds $K(p)$, is trickier and combines existing bounds with a hypercontractivity argument. The full proof is presented in \ref{ap:high_dim}. 

\subsection{Bounds for Square and Nearly Square Matrices $(d\approx n)$}\label{seq:square}
Even though the bounds of \thmref{thm:low_dim_sub_gaus} are sharp up to a multiplicative constant, such constant makes the bounds weaker when $\sqrt{\frac{d}{n}}$ is not small. In particular, the lower bound on the eigenvalues of $\hat{\Sigma}$ may be vacuous. Nevertheless, \cite{rudelson2009smallest} provide lower bounds for the singular values of $Z$ that are more suitable for square and nearly square matrices. 
\begin{proposition}[\cite{rudelson2009smallest} Theorem 1.1]\label{prop:iso_square}
    Let $Z \in\R^{n\times d}$ be an $n\times d$ matrix with $d\leq n$, whose entries $z_{ij}$ are i.i.d. mean-zero random variables with variance $1$ and sub-gaussian norm $\norm{z_{ij}}_{\psi_2} \leq K$. Then for some constants $C_K, c_K >0$ that depend (polynomially) only on $K$, and any $t\geq 0$, it holds w.p. at least $1-\left(\frac{C_K}{t}\right)^{n-d+1} + e^{-c_Kn}$ that
    \begin{align*}
        \lambda_d\left(\frac{1}{n}Z^\top Z\right) \geq \frac{1}{t^2} \left(1 - \sqrt{\frac{d-1}{n}}\right)^2
    \end{align*}
\end{proposition}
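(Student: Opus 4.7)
The proposition is equivalent to the smallest-singular-value bound $s_d(Z) \geq t^{-1}(\sqrt{n}-\sqrt{d-1})$, since $\lambda_d(\frac{1}{n}Z^\top Z) = s_d(Z)^2/n$. Writing $s_d(Z) = \inf_{x\in\Sphere^{d-1}}\norm{Zx}$, the plan is to decompose the sphere into \emph{compressible} vectors (those within Euclidean distance $\rho$ of some $\delta d$-sparse unit vector, for suitable constants $\rho,\delta$ depending only on $K$) and \emph{incompressible} vectors, and to handle the two regimes by entirely different arguments. This compressible/incompressible dichotomy is the standard Rudelson--Vershynin framework and is needed because net arguments alone cannot capture the exponent $n-d+1$ in the failure probability.

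For the compressible part, the $\delta d$-sparse unit vectors lie in a union of $\binom{d}{\delta d}$ coordinate subspheres, which admits a collective $\varepsilon$-net of cardinality at most $\exp(C\delta d\log(1/\delta\varepsilon))$. On each $\delta d$-dimensional coordinate subspace, $Z$ restricts to an $n\times\delta d$ sub-gaussian matrix with independent isotropic rows, so \thmref{thm:low_dim_isotropic} gives $\norm{Zx}\geq c_K\sqrt{n}$ uniformly over that subsphere with probability $\geq 1-e^{-c_K n}$, provided $\delta$ is taken small enough (depending only on $K$). A union bound over the subspaces and net points, followed by net-to-sphere approximation using a deviation bound on $\norm{Z}$, extends this lower bound to all compressible vectors with total failure probability $\leq e^{-c_K n}$, matching the second term in the stated bound.

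For the incompressible part, the key algebraic input is the \emph{invertibility via distance} inequality: if $x$ is incompressible then there exists an index $k$ with $\abs{x_k}\geq c/\sqrt{d}$, and
\[
\norm{Zx} \;\geq\; \abs{x_k}\cdot \mathrm{dist}(Z_k, H_k), \qquad H_k := \mathrm{span}(Z_j : j\neq k),
\]
where $Z_j$ denotes the $j$-th column of $Z$. Conditioning on the other columns, $H_k$ is at most $(d-1)$-dimensional, so $H_k^\perp$ has dimension at least $n-d+1$ and $\mathrm{dist}(Z_k, H_k) = \norm{P_{H_k^\perp}Z_k}$. It therefore suffices to lower bound $\norm{P_{H_k^\perp}Z_k}$ and then union bound over $k\in[d]$.

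The main obstacle is the small ball estimate
\[
\P\bigl(\norm{P_{H_k^\perp}Z_k} \leq s\sqrt{n-d+1}\bigr) \;\leq\; (C_K s)^{n-d+1},
\]
in which the exponent must match the codimension $n-d+1$ exactly; this is precisely what makes the polynomial-in-$t$ failure rate $(C_K/t)^{n-d+1}$ possible and is the heart of the Rudelson--Vershynin proof. I would obtain it by expanding $\norm{P_{H_k^\perp}Z_k}^2 = \sum_\ell \langle Z_k, u_\ell\rangle^2$ in an orthonormal basis $\{u_\ell\}$ of $H_k^\perp$ and \emph{tensorizing} a one-dimensional Esseen/Halász-type Littlewood--Offord small ball bound for sums of independent sub-gaussian scalars along each direction $u_\ell$, crucially exploiting the independence of the entries of $Z_k$ to turn scalar bounds into a product bound of the correct order. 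Setting $s = c_K/t$ and union bounding over $k$ (the factor $d$ being absorbed by reducing $c_K$) yields the $(C_K/t)^{n-d+1}$ term, which combined with the compressible bound delivers the claimed failure probability.
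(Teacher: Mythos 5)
The paper does not prove this proposition; it is imported verbatim (after a change of normalization) from \cite{rudelson2009smallest}, Theorem~1.1, and used as a black box in \thmref{thm:square}. There is therefore no in-paper proof to compare against. As a reconstruction of the Rudelson--Vershynin argument, your sketch correctly identifies the compressible/incompressible decomposition, the net argument for the compressible part, and the reduction of the incompressible part to a distance-to-a-random-subspace estimate whose small-ball exponent must match the codimension $n-d+1$. These are the right ingredients.

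The gap is in the step you yourself flag as the heart of the matter. After expanding $\norm{P_{H_k^\perp}Z_k}^2 = \sum_\ell \langle Z_k, u_\ell\rangle^2$ in an orthonormal basis of $H_k^\perp$, the summands are \emph{uncorrelated} but not \emph{independent}: a fixed rotation of a vector with non-Gaussian i.i.d.\ coordinates does not have independent coordinates, so there is no product structure to exploit. Tensorization lemmas of the type used in \cite{rudelson2009smallest} (their Lemma~2.2) require independent summands, hence the proposed ``scalar Littlewood--Offord, then multiply'' step does not go through. That obstruction is precisely why the distance estimate is a theorem in its own right in \cite{rudelson2009smallest} (their Theorem~1.4) whose proof is the bulk of the paper and runs, among other things, a further compressible/incompressible analysis inside the projected space rather than a coordinatewise tensorization. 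A secondary issue is the closing union bound over $k\in[d]$: it inflates the failure probability by a factor of $d$, which cannot be absorbed into the constant $C_K$ of $(C_K/t)^{n-d+1}$ when $n-d+1=O(1)$ (for instance when $n=d$ the exponent is $1$); Rudelson--Vershynin avoid this by averaging over the $\Omega(d)$ spread coordinates of an incompressible vector instead of taking a maximum over $k$.
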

The following result follows from combining their bounds with \propref{prop:ostrowski}.

\begin{restatable}[Square/Nearly-Square Matrices]{theorem}{square}\label{thm:square}
    Under \assref{ass1}, assume further that $\bx_i$ are mean-zero, and that the entries $z_{ij}$ of $\bz_i:=\Sigma^{-\nicefrac{1}{2}}\bx_i$ are i.i.d. sub-gaussian random variables with variance $1$ and sub-gaussian norm $\norm{z_{ij}}_{\psi_2} \leq K$ for all $i\in[n],j\in[d]$. Then for some absolute constant $\tilde{C} >0$, constants $C_K, c_K >0$ that depend (polynomially) only on $K$, and any $t_1,t_2\geq 0$ the following hold:

    \begin{enumerate}
        \item If $n\geq d$, then w.p. at least $1-\left(\frac{C_k}{t_1}\right)^{n-d+1} + e^{-c_Kn}-2\exp(-t_2^2)$ that for all $i\in[d]$,
        \begin{align}\label{eq:square_p1}
        \lambda_{i}\left(\Sigma\right)\left(\frac{1}{t_1^2}\left(1 - \epsilon_1 \right)^2\right) 
        \leq \lambda_i\left(\hat\Sigma\right) \leq \lambda_i\left(\Sigma\right)\left(1 + \tilde{C}K^2\max\left(\epsilon_2, \epsilon_2^2\right)\right),
        \end{align}
        where $\epsilon_1:=\sqrt{\frac{d-1}{n}}$ and $\epsilon_2:=\sqrt{\frac{d}{n}}+\frac{t_2}{\sqrt{n}}$.

        \item If $d\geq n$, then w.p. at least $1-\left(\frac{C_k}{t_1}\right)^{d-n+1} + e^{-c_Kd}-2\exp(-t_2^2)$ that for all $i\in[n]$,
        \begin{align}\label{eq:square_p2}
        \lambda_{i+d-n}\left(\Sigma\right)\left(\frac{1}{t_1^2}\left(1 - \epsilon_1 \right)^2\right)  
        \leq \frac{n}{d} \lambda_i\left(\hat\Sigma\right) \leq  \lambda_i\left(\Sigma\right)\left(1 + \tilde{C}K^2\max\left(\epsilon_2, \epsilon_2^2\right)\right),
        \end{align}
        where $\epsilon_1:=\sqrt{\frac{n-1}{d}}$ and $\epsilon_2:=\sqrt{\frac{n}{d}}+\frac{t_2}{\sqrt{d}}$.
    \end{enumerate}
\end{restatable}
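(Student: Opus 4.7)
The plan is to obtain the two sides of each inequality by applying Proposition~\ref{prop:ostrowski} directly, rather than going through Theorem~\ref{thm:gen_bound}. This is essential because, for nearly-square matrices, the unified bound in Theorem~\ref{thm:gen_bound} controls both extremal eigenvalues of $\tfrac{1}{n}Z^\top Z$ through a single quantity $\|\tfrac{1}{n}Z^\top Z-I_d\|_2$, which is typically of order $\sqrt{d/n}$ and can exceed $1$ once $d$ is close to $n$, making the resulting lower bound vacuous. The point of the theorem is to replace that single bound with two separate estimates: Theorem~\ref{thm:low_dim_isotropic} for $\lambda_1$ and Proposition~\ref{prop:iso_square} for the smallest (nonzero) eigenvalue, the latter of which remains strictly positive as long as the aspect ratio is strictly away from $1$.

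For Part~1 ($n\geq d$), set $Z:=X\Sigma^{-\nicefrac{1}{2}}$, so $\hat\Sigma=\tfrac{1}{n}\Sigma^{\nicefrac{1}{2}}Z^\top Z\,\Sigma^{\nicefrac{1}{2}}$. Proposition~\ref{prop:ostrowski} with $\min(n,d)=d$ gives
\begin{align*}
\lambda_i(\Sigma)\,\lambda_d\!\left(\tfrac{1}{n}Z^\top Z\right)\ \leq\ \lambda_i(\hat\Sigma)\ \leq\ \lambda_i(\Sigma)\,\lambda_1\!\left(\tfrac{1}{n}Z^\top Z\right).
\end{align*}
Since the entries $z_{ij}$ are i.i.d.\ mean-zero with variance $1$, the rows of $Z$ are independent, mean-zero, and isotropic; by \cite{vershynin2010introduction}[Lemma~5.24] they are sub-gaussian with norm $\lesssim K$. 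Theorem~\ref{thm:low_dim_isotropic} then bounds $\|\tfrac{1}{n}Z^\top Z-I_d\|_2\leq \tilde CK^2\max(\eps_2,\eps_2^2)$ with probability at least $1-2\exp(-t_2^2)$, giving the required upper bound on $\lambda_1$. For the lower bound, Proposition~\ref{prop:iso_square} applies directly to $Z$, yielding $\lambda_d(\tfrac{1}{n}Z^\top Z)\geq t_1^{-2}(1-\eps_1)^2$ with the stated failure probability. A union bound combines the two events.

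For Part~2 ($d\geq n$), Proposition~\ref{prop:ostrowski} with $\min(n,d)=n$ and scaling by $\tfrac{1}{d}$ gives
\begin{align*}
\lambda_{i+d-n}(\Sigma)\,\lambda_n\!\left(\tfrac{1}{d}Z^\top Z\right)\ \leq\ \tfrac{n}{d}\lambda_i(\hat\Sigma)\ \leq\ \lambda_i(\Sigma)\,\lambda_1\!\left(\tfrac{1}{d}Z^\top Z\right).
\end{align*}
Using $\lambda_j(Z^\top Z)=\lambda_j(ZZ^\top)$ for $j\leq n$, I apply the same two tools to $Z^\top$, which is a $d\times n$ matrix with $d\geq n$. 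Its entries are still i.i.d.\ mean-zero sub-gaussian of variance $1$, so its rows (the columns of $Z$) are independent, mean-zero, isotropic in $\R^n$ with sub-gaussian norm $\lesssim K$. Theorem~\ref{thm:low_dim_isotropic} applied to $Z^\top$ (swapping the roles of $n$ and $d$) yields $\|\tfrac{1}{d}ZZ^\top-I_n\|_2\leq \tilde CK^2\max(\eps_2,\eps_2^2)$ with $\eps_2=\sqrt{n/d}+t_2/\sqrt{d}$, and Proposition~\ref{prop:iso_square} applied to $Z^\top$ gives $\lambda_n(\tfrac{1}{d}ZZ^\top)\geq t_1^{-2}(1-\eps_1)^2$ with $\eps_1=\sqrt{(n-1)/d}$. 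A union bound then delivers \eqref{eq:square_p2}.

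The main obstacle is really the structural observation of the first paragraph: Theorem~\ref{thm:gen_bound} has to be bypassed and Proposition~\ref{prop:ostrowski} invoked directly so that the two-sided inequality can be split and attacked with two different concentration results. Once that is recognized, verifying the isotropy/sub-gaussianity conditions after transposition is routine (i.i.d.\ entries of mean zero and unit variance make both rows and columns isotropic), and combining the probabilistic statements is a standard union bound. No delicate calculations are required beyond what is already packaged in Theorem~\ref{thm:low_dim_isotropic} and Proposition~\ref{prop:iso_square}.
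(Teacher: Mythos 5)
Your proposal is correct and follows essentially the same route as the paper: split the two-sided inequality at the level of Proposition~\ref{prop:ostrowski}, use the sub-gaussian concentration bound (Theorem~\ref{thm:low_dim_isotropic}) for the top eigenvalue / upper bound, and use Proposition~\ref{prop:iso_square} for the smallest-eigenvalue / lower bound, then union-bound. The paper packages the upper-bound side by citing Theorems~\ref{thm:low_dim_sub_gaus} and \ref{thm:high_dim_sub_gaus} (which are themselves Proposition~\ref{prop:ostrowski} plus Theorem~\ref{thm:low_dim_isotropic}), so your direct invocation of Proposition~\ref{prop:ostrowski} for both sides is only a cosmetic repackaging, not a genuinely different argument.
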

\begin{proof}
    The upper bounds for \eqref{eq:square_p1} and \eqref{eq:square_p2} are given by \thmref{thm:low_dim_sub_gaus} and \thmref{thm:high_dim_sub_gaus} respectively.

    Let $Z:=X\Sigma^{-\nicefrac{1}{2}}\in\R^{n\times d}$ so that $Z:=[\bz_1, \ldots, \bz_n]^\top$. It is readily seen that $z_{ij}$ are mean-zero, as $\E[Z]=\E[X]\Sigma^{-\nicefrac{1}{2}}=\zero$, and therefore satisfy all the conditions of \propref{prop:iso_square}. 
    To prove the lower bound of \eqref{eq:square_p1}, by \propref{prop:ostrowski} it holds that
    \begin{align*}
    \lambda_{i}\left(\Sigma\right)\lambda_{d}\left(\frac{1}{n}Z^{\top}Z\right) \leq \lambda_i\left(\hat\Sigma\right), 
    \end{align*}
    from which \eqref{eq:square_p1} follows by bounding $\lambda_{d}\left(\frac{1}{n}Z^{\top}Z\right)$ using \propref{prop:iso_square}.

    Analogously for the lower bound of \eqref{eq:square_p2}, by \propref{prop:ostrowski} it holds that
    \begin{align*}
        \lambda_{i+d-n}\left(\Sigma\right)\lambda_{n}\left(\frac{1}{d}ZZ^{\top}\right) \leq \frac{n}{d}\lambda_i\left(\hat\Sigma\right), 
    \end{align*}
    from which \eqref{eq:square_p2} follows by bounding $\lambda_{n}\left(\frac{1}{d}ZZ^{\top}\right)$ by applying  \propref{prop:iso_square} on $Z^\top$ instead of $Z$ (with the roles of $n$ and $d$ reversed).
\end{proof}

Unlike in the $d\gg n$ or $d \ll n$ cases, the bounds that one can expect in the $d\approx n$ regime are somewhat looser. However, this is not a limitation of our method and is expected by asymptotic results. Consider for example the case when $\bx_i$ are isotropic (so that $\bx_i=\bz_i$ and $\lambda_i(\Sigma)=1$) and $\frac{d}{n}\to\gamma$ for some fixed $\gamma \in (0,1)$. In this case, the Bai-Yin theorem \citep{bai2008limit} states that asymptotically all eigenvalues $\lambda_i\left(\hat\Sigma\right)$ will be in the range $\left[(1-\sqrt{\gamma})^2, (1+\sqrt{\gamma})^2\right]$. This matches the lower bound of \thmref{thm:square} up to a multiplicative factor given by $t_1^2$. The upper bound of \thmref{thm:square} roughly gives $1+\tilde{C}K^2 \sqrt{\gamma}$ closely resembling the asymptotic upper bound of $1+2\sqrt{\gamma}+\gamma$. The tightness of \thmref{thm:square} in the $d>n$ case is analogous but with the roles of $d$ and $n$ reversed. We also remark that for the square case, when $d=n$, \eqref{eq:square_p1} and \eqref{eq:square_p2} yield the same bound.

% Acknowledgements and Disclosure of Funding should go at the end, before appendices and references

\acks{The authors thank Boaz Nadler and Ofer Zeitouni for helpful discussions during the initial stages of this manuscript. 
This research is supported in part by European Research Council (ERC) grant 754705, by the Israeli Council for Higher Education (CHE) via the Weizmann Data Science Research Center and by research grants from the Estate of Harry Schutzman and the Anita James Rosen Foundation. }

\appendix
\section{Omitted Proofs}
\subsection{Proof of \propref{prop:ostrowski}}\label{ap:gen_bound}
\Ostrowski*
\begin{proof}
    First, we note that as $\lambda_i\left(\Sigma^{\nicefrac{1}{2}}Z^* Z \Sigma^{\nicefrac{1}{2}}\right) = \lambda_i\left(Z\Sigma Z^*\right)$ for all $i\leq \min(n,d)$, we equivalently prove that
    \begin{align*}
        \lambda_{i+d-\min(n,d)}\left(\Sigma\right)\lambda_{\min(n,d)}\left(Z^{*}Z\right) \leq \lambda_i\left(Z\Sigma Z^*\right) \leq \lambda_i(\Sigma)\lambda_1\left(Z^{*}Z\right).
    \end{align*}
    
    We first prove the lower bound. If $\lambda_{i+d-\min(n,d)}(\Sigma)=0$ or $\lambda_{\min(n,d)}\left(Z^{*}Z\right)=0$ the claim is trivial as $Z\Sigma Z^*$ is p.s.d. So assume they are both $>0$, meaning that $\Sigma$ has at least $i+d-\min(n,d)$ eigenvalues $\geq \lambda_{i+d-\min(n,d)}(\Sigma) >0$ and $ZZ^*$ has at least $\min(n,d)$ eigenvalues $\geq \lambda_{\min(n,d)}(ZZ^*)>0$. By \lemref{lem:variational1}, $Z\Sigma Z^*$ has at least $i$ eigenvalues that are at least $\lambda_{i+d-\min(n,d)}\left(\Sigma\right)\lambda_{\min(n,d)}\left(ZZ^*\right)$. This together with the fact that $\lambda_{\min(n,d)}\left(ZZ^*\right)=\lambda_{\min(n,d)}\left(Z^{*}Z\right)$ proves the lower bound. 

    For the upper bound, since $\Sigma$ has at least $d+1-i$ eigenvalues $\leq \lambda_{i}(\Sigma)$ and $ZZ^*$ has $n$ eigenvalues $\leq \lambda_{i}(ZZ^*)$, by \lemref{lem:variational2}, $Z\Sigma Z^*$ has at least $n+1-i$ eigenvalues that are at most $\lambda_{i}\left(\Sigma\right)\lambda_{1}\left(ZZ^*\right)$. This together with the fact that $\lambda_{1}\left(ZZ^*\right)=\lambda_{1}\left(Z^{*}Z\right)$ is equivalent to the upper bound.
\end{proof}

\subsection{Auxiliary Lemmas for \propref{prop:ostrowski}}

The following is a well-known corollary of the Courant-Fischer Min-Max theorem (see e.g. \cite{horn2012matrix}[Theorem 4.2.6])
\begin{lemma}\label{lem:min-max}
    A hermitian matrix $M\in \C^{d\times d}$ has $r$ eigenvalues that are $\geq a$ for some $a\geq 0$ if and only if there is an $r$-dimensional subspace $V\subseteq \C^d$ such that $\bv^* M\bv \geq a\bv^* \bv$ for all $\bv\in V$.
\end{lemma}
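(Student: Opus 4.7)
The plan is to derive both directions of the equivalence directly from the Courant--Fischer max-min characterization: for a Hermitian $M\in\C^{d\times d}$ with eigenvalues $\lambda_1\geq\lambda_2\geq\cdots\geq\lambda_d$,
\[
\lambda_r \;=\; \max_{W\subseteq\C^d,\ \dim W = r}\ \min_{\bv\in W\setminus\{0\}}\frac{\bv^* M\bv}{\bv^*\bv}.
\]
With this identity in hand, the lemma reduces to exhibiting a witness subspace in one direction and plugging the given subspace into the outer maximum in the other.

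For the forward (``only if'') direction I would invoke the spectral theorem: assuming $M$ has at least $r$ eigenvalues $\geq a$, equivalently $\lambda_r\geq a$, pick orthonormal eigenvectors $\bu_1,\ldots,\bu_r$ associated with $\lambda_1,\ldots,\lambda_r$ and set $V=\mathrm{span}(\bu_1,\ldots,\bu_r)$. Expanding any $\bv\in V$ in this orthonormal basis as $\bv=\sum_{i=1}^r c_i\bu_i$ yields $\bv^* M\bv = \sum_{i=1}^r |c_i|^2\lambda_i \geq a\sum_{i=1}^r |c_i|^2 = a\,\bv^*\bv$, so $V$ is the required $r$-dimensional subspace.

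For the converse (``if'') direction, suppose that some $r$-dimensional $V\subseteq\C^d$ satisfies $\bv^* M\bv \geq a\bv^*\bv$ for all $\bv\in V$. Substituting $W=V$ as a feasible choice inside the outer maximum of the Courant--Fischer identity gives $\lambda_r \geq \min_{\bv\in V\setminus\{0\}}\bv^* M\bv/\bv^*\bv \geq a$, which means $\lambda_1,\ldots,\lambda_r$ are all at least $a$.

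There is no substantial obstacle here; the statement is essentially a reformulation of Courant--Fischer. The only points worth being explicit about are the convention that ``$M$ has $r$ eigenvalues $\geq a$'' refers to $\lambda_r\geq a$ when the eigenvalues are listed in decreasing order (with multiplicity), and the trivial edge case $r=0$, which holds vacuously with $V=\{0\}$.
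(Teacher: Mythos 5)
Your proof is correct and matches the paper's intent exactly: the paper does not supply its own argument but simply cites this as a well-known corollary of the Courant--Fischer theorem (pointing to Horn and Johnson, Theorem 4.2.6), and your two-direction derivation (spectral theorem for the witness subspace, Courant--Fischer max-min for the converse) is precisely the standard route. Your note that ``$M$ has $r$ eigenvalues $\geq a$'' should be read as ``at least $r$'' (equivalently $\lambda_r \geq a$) is the right reading and is the one the paper uses downstream.
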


The following \lemref{lem:variational1}, \lemref{lem:variational2} and their proofs can be found in \cite{dancis1986quantitative}.
\begin{lemma}\label{lem:variational1}
Let $\Sigma \in \C^{d\times d}$ be a hermitian and $Z\in \C^{n\times d}$ be any $n \times d$ matrix. Suppose that:
\begin{enumerate}
\item $Z Z^*$ has $r$ eigenvalues that are $\geq a_1 > 0$;
\item $\Sigma$ has $s$ eigenvalues that are $\geq a_2 > 0$.
\end{enumerate}
Then the matrix $Z \Sigma Z^*$ has at least $r + s - d$ eigenvalues that are $\geq a_1a_2$.
\end{lemma}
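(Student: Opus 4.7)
The plan is to apply \lemref{lem:min-max} in the reverse direction: I will exhibit a subspace $V \subseteq \C^n$ of dimension at least $r+s-d$ on which the quadratic form $v \mapsto v^*(Z\Sigma Z^*)v$ dominates $a_1 a_2 \|v\|^2$. The conclusion then follows immediately from \lemref{lem:min-max}. If $r+s-d \leq 0$ the statement is vacuous, so I may assume $r+s-d \geq 1$.

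First, I would extract the two hypothesized subspaces. By \lemref{lem:min-max} applied to $ZZ^*$, there is an $r$-dimensional subspace $U_1 \subseteq \C^n$ with $v^* ZZ^* v \geq a_1 v^* v$ for all $v \in U_1$. Applied to $\Sigma$, it gives an $s$-dimensional subspace $U_2 \subseteq \C^d$ with $w^* \Sigma w \geq a_2 w^* w$ for all $w \in U_2$. The natural candidate is then
\[
V \;:=\; U_1 \cap (Z^*)^{-1}(U_2),
\]
so that every $v \in V$ both lies in $U_1$ and satisfies $Z^* v \in U_2$.

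Next I would verify the quadratic-form bound on $V$. For $v \in V$, the condition $Z^*v \in U_2$ gives $v^* Z\Sigma Z^* v = (Z^*v)^* \Sigma (Z^*v) \geq a_2\,(Z^*v)^*(Z^*v) = a_2 v^* ZZ^* v$, and then $v \in U_1$ gives $v^* ZZ^* v \geq a_1 v^* v$, so chaining the inequalities yields $v^* Z\Sigma Z^* v \geq a_1 a_2 v^* v$, which is the hypothesis of \lemref{lem:min-max}.

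The part requiring the most care is the dimension count. The codimension of $U_2$ in $\C^d$ is $d-s$, so $(Z^*)^{-1}(U_2)$ has codimension at most $d-s$ in $\C^n$ (this uses only that the preimage of a subspace under a linear map has codimension no larger than that of the target). Intersecting with $U_1$ drops its dimension by at most that codimension, giving $\dim V \geq r - (d-s) = r+s-d$, which is exactly what \lemref{lem:min-max} needs to conclude that $Z\Sigma Z^*$ has at least $r+s-d$ eigenvalues $\geq a_1 a_2$. The main pitfall I anticipate is simply being rigorous here: one must not assume $Z^*$ is injective or surjective, and the inequality (rather than equality) in the codimension bound is what makes the argument go through in full generality.
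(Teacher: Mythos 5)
Your proof is correct and rests on the same skeleton as the paper's: extract an $r$-dimensional subspace from the min--max characterization of $ZZ^*$ and an $s$-dimensional subspace from that of $\Sigma$, build a subspace of $\C^n$ that lands in both, chain the two quadratic-form inequalities, and invoke \lemref{lem:min-max} once more. Where you differ is in the dimension bookkeeping: the paper pushes its $r$-dimensional subspace \emph{forward} through $Z^*$ into $\C^d$ (first arguing $V\cap\ker Z^*=0$ so the restriction is a bijection), intersects with the $s$-dimensional subspace there, and then pulls back; you instead pull the $s$-dimensional subspace \emph{back} to $\C^n$ via the preimage $(Z^*)^{-1}(U_2)$, use that taking preimages does not increase codimension, and intersect with $U_1$ directly in $\C^n$. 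Your route cleanly sidesteps the injectivity discussion entirely (the preimage codimension bound needs no assumption on $Z^*$), at the small cost of needing the one-line lemma that $\mathrm{codim}\,(Z^*)^{-1}(U_2)\le\mathrm{codim}\,U_2$, which you correctly flag and which is indeed the only point requiring care.
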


\begin{proof}
Let $T(\bv):=Z^* \bv$ denote the linear map corresponding to $Z^*$. Since $Z Z^*$ has $r$ eigenvalues $> a_1$, by \lemref{lem:min-max} there is a subspace $V\subseteq \C^n$ with dimension $r$ such that $\bv^* ZZ^* \bv \geq a_1$ for all $\bv \in V$. Therefore $V \cap \text{Ker} Z^* = 0$, and hence the image space $T(V)$ is a linear subspace of dimension $r$.

Likewise, since $\Sigma$ has at least $s$ eigenvalues $> a_2$, there is a subspace $W\subseteq\C^d$ of dimension $s$ such that $\bw^* \Sigma \bw > a_2 \bw^* \bw$ for all $\bw\in W$.

Let $T\restriction_{V}$ denote the restriction of $T$ to the subspace $V$. $T\restriction_{V}$ is a bijective linear map from from $V$ to $T(V)$, and therefore $U := (T\restriction_{V})^{-1}\left(T(V) \cap W\right)$ is a linear subspace with $\dim U = \dim \left(T(V) \cap W \right)$. We can thus use the standard formula for the dimension of the intersection of halfspaces \citep{horn2012matrix}[Equation 0.1.7.2] to obtain
\begin{align*}
\dim \left(T(V) \cap W\right) = \dim T(V) + \dim W - \dim(T(V) + W) \geq r + s - d.
\end{align*}

Furthermore, the definition of $U$ implies for any $\bv \in U$ that $\bv\in V$ and $\bw:=Z^* \bv \in W$. Therefore, 
\begin{align*}
    \bv^* Z \Sigma Z^* \bv = \bw^* \Sigma \bw \geq a_2 \bw^* \bw = a_2 \bv^* ZZ^* \bv \geq a_1a_2 \bv^* \bv.
\end{align*}
The proof now follows from \lemref{lem:min-max}.
\end{proof}

\begin{lemma}\label{lem:variational2}
Let $\Sigma \in \C^{d\times d}$ be hermitian matrix and $Z\in \C^{n\times d}$ be any $n \times d$ matrix. Suppose that for some $b_1,b_2 \geq 0$:
\begin{enumerate}
\item $Z Z^*$ has $r$ eigenvalues that are $\leq b_1$;
\item $\Sigma$ has $s$ eigenvalues that are $\leq b_2$ for $b_2 > 0$.
\end{enumerate}
Then the matrix $Z \Sigma Z^*$ has at least $r + s - d$ eigenvalues that are $\leq b_1b_2$.
\end{lemma}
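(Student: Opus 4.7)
The plan is to mirror the argument used for \lemref{lem:variational1}, but phrased in terms of upper bounds rather than lower bounds. First, I would record the "dual" version of \lemref{lem:min-max}: a Hermitian matrix $M \in \C^{d\times d}$ has $r$ eigenvalues that are $\leq b$ if and only if there is an $r$-dimensional subspace $V \subseteq \C^d$ with $\bv^* M \bv \leq b\, \bv^* \bv$ for every $\bv \in V$. This follows either by direct application of the Courant--Fischer theorem or by applying \lemref{lem:min-max} to $-M$ (shifted to be p.s.d.\ if needed).

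Using this, I would extract subspaces encoding the two upper-bound hypotheses: an $r$-dimensional $V \subseteq \C^n$ with $\bv^* ZZ^* \bv \leq b_1 \bv^*\bv$ for all $\bv \in V$, and an $s$-dimensional $W \subseteq \C^d$ with $\bw^* \Sigma \bw \leq b_2 \bw^*\bw$ for all $\bw \in W$. Letting $T(\bv) := Z^* \bv$ as before, define
\[
U \;:=\; \{\bv \in V : Z^* \bv \in W\} \;=\; (T\restriction_V)^{-1}(W).
\]
Then for any $\bv \in U$, setting $\bw = Z^*\bv \in W$, the bounds chain exactly as desired:
\[
\bv^* Z \Sigma Z^* \bv \;=\; \bw^* \Sigma \bw \;\leq\; b_2 \bw^*\bw \;=\; b_2 \bv^* ZZ^* \bv \;\leq\; b_1 b_2 \bv^*\bv.
\]
Applying the dual min--max characterization to $Z\Sigma Z^*$ on $U$ then yields the claim, provided $\dim U \geq r + s - d$.

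The main obstacle, which makes this slightly subtler than \lemref{lem:variational1}, is that the positivity trick used there ($a_1 > 0$ forced $V \cap \ker Z^* = 0$ and hence $T\restriction_V$ was injective) is no longer available: $b_1$ may be zero, so $\ker Z^*$ can sit inside $V$. The right way to absorb this is to include the kernel directly in the dimension count. Using rank--nullity on $T\restriction_V$ together with the standard intersection-of-subspaces identity $\dim(T(V) \cap W) \geq \dim T(V) + \dim W - d$, one gets
\[
\dim U \;=\; \dim \ker(T\restriction_V) + \dim(T(V) \cap W) \;\geq\; \dim\ker(T\restriction_V) + \dim T(V) + s - d \;=\; r + s - d,
\]
and this is harmless because vectors in $\ker(T\restriction_V) \subseteq U$ trivially satisfy $\bv^* Z\Sigma Z^* \bv = 0 \leq b_1 b_2 \bv^*\bv$. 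With the dimension bound in hand, the dual min--max lemma closes the proof.
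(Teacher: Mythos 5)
Your proof is correct and is essentially the same argument as the paper's: extract subspaces $V$ and $W$ from the dual form of \lemref{lem:min-max}, pull back $W$ through $T = Z^*$, chain the quadratic-form bounds, and count dimensions. The one place you do something noticeably cleaner than the paper is the dimension count: by working with $U = (T\restriction_V)^{-1}(W)$ and applying rank--nullity to $T\restriction_V$ directly, you get
\[
\dim U \;=\; \dim\ker(T\restriction_V) + \dim\bigl(T(V)\cap W\bigr) \;\geq\; \dim\ker(T\restriction_V) + \dim T(V) + s - d \;=\; r+s-d
\]
without any assumption on how $\ker T$ sits relative to $V$. The paper instead asserts $\ker T \subseteq V$, which is only guaranteed if $V$ is chosen as the span of eigenvectors for the $r$ smallest eigenvalues of $ZZ^*$ after an implicit reduction to the case where $r$ is the exact count of eigenvalues $\leq b_1$ (and, as you note, the paper's stated hypothesis is $b_1 \geq 0$, not $b_1 > 0$, so the positivity trick from \lemref{lem:variational1} is not available). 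Your version sidesteps that choice entirely, so it is marginally more robust in exposition, though not a genuinely different route. Your closing observation that vectors in $\ker(T\restriction_V)$ satisfy the bound trivially is a correct sanity check but in fact not a separate case — the chain $\bv^* Z\Sigma Z^*\bv = \bw^*\Sigma\bw \leq b_2\bw^*\bw = b_2\,\bv^*ZZ^*\bv \leq b_1 b_2\,\bv^*\bv$ already covers $\bw = 0$.
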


\begin{proof}
The proof is very similar to the previous lemma. Let $T(\bv):=Z^* \bv$ denote the linear map corresponding to $Z^*$. Since $-Z Z^*$ has $r$ eigenvalues $\geq - b_1$, by \lemref{lem:min-max} there is a subspace $V\subseteq \C^n$ with dimension $r$ such that for all $\bv \in V$, $\bv^* (-ZZ^*) \bv \geq -b_1$, or equivalently $\bv^* (ZZ^*) \bv \leq b_1$. However this time, as $b_1>0$, $\ker T \subseteq V$ and therefore $\dim T(V) = r - \dim\ker T$.

Again, there is a subspace $W\subseteq\C^d$ of dimension $s$ such that $\bw^* \Sigma \bw \leq b_2 \bw^* \bw$ for all $\bw\in W$.

Set $U := T^{-1}\left(T(V) \cap W\right)$ (where $T^{-1}$ is the preimage), is a linear subspace with $\dim U = \dim\ker T + \dim \left(T(V) \cap W \right)$. Furthermore,
\begin{align*}
\dim \left(T(V) \cap W\right) &= \dim T(V) + \dim W - \dim(T(V) + W) \\
& \geq r - \dim\ker T + s - d.
\end{align*}
We thus obtain that $\dim U \geq r + s -n$. Furthermore, the definition of $U$ implies for any $\bv \in U$ that $\bv\in V$ and $\bw:=Z^* \bv \in W$. Therefore, 
\begin{align*}
    \bv^* Z \Sigma Z^* \bv = \bw^* \Sigma \bw \leq b_2 \bw^* \bw = b_2 \bv^* ZZ^* \bv \leq b_1b_2 \bv^* \bv.
\end{align*}
The proof now follows from \lemref{lem:min-max}.

\end{proof}

\subsection{Proof of \thmref{thm:high_dim}} \label{ap:high_dim}
As in the rest of this paper, by \thmref{thm:gen_bound}, it suffices to bound $\norm{\frac{1}{d}Z Z^\top - I_n}_2$ in the setting of \eqref{eq:high_dim1} in order to prove it, and bound $\E\left[\norm{\frac{1}{d}Z Z^\top - I_n}_2\right]$ for \eqref{eq:high_dim2}. The following preliminary bound in \ref{lem:versh_high_dim} proves the first case. For the second case, we will build upon this lemma in \propref{prop:high_dim} that will complete the proof.

\begin{lemma}[\cite{vershynin2010introduction} Theorems 5.58, 5.62]\label{lem:versh_high_dim}
    Let $Z \in\R^{n\times d}$ be an $n\times d$ matrix for some $d\geq n\in\N$, whose rows $\bz_i$ are i.i.d. isotropic random vectors in $\R^d$ with $\norm{\bz_i}= \sqrt{d}$ a.s. 
    \begin{enumerate}
        \item For some constants $C_K,c_K>0$ which depend only on the sub-gaussian norm $K=\max_i\norm{\bz_i}_{\psi_2}$ and any $t\geq 0$ it holds w.p. at least $1-2\exp(-c_Kt^2)$ that for all $1\leq i \leq \min(n,d)$, and $\epsilon:=C_K\sqrt{\frac{n}{d}} + \frac{t}{\sqrt{d}}$,
        \begin{align*}
        \norm{\frac{1}{d}ZZ^\top - I_n}_2\leq \max(\epsilon, \epsilon^2).
        \end{align*}

        \item Letting
        \begin{align*}
        m := \frac{1}{d} \mathbb{E} \max_{j \leq n} \sum_{k \in [n], k \neq j} \langle \bz_j, \bz_k \rangle^2
        \end{align*}
        be the incoherence parameter, it holds for some constant $C>0$ and $\epsilon:=C\sqrt{\frac{m \log n}{d}}$ that
        \begin{align*}
        \E\left[\norm{\frac{1}{d}Z Z^\top - I_n}_2\right] \leq \epsilon.
        \end{align*}
    \end{enumerate}
\end{lemma}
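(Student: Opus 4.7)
The key structural observation is that the a.s.\ constraint $\|\bz_i\| = \sqrt d$ forces the diagonal of $\frac{1}{d} ZZ^{\top}$ to equal $I_n$ identically, so $\frac{1}{d}ZZ^{\top} - I_n$ is a symmetric hollow matrix with off-diagonal entries $\frac{1}{d}\langle \bz_j,\bz_k\rangle$. Equivalently, for any unit $u \in \R^n$,
\[
u^{\top}\Bigl(\tfrac{1}{d}ZZ^{\top}-I_n\Bigr) u \;=\; \tfrac{1}{d}\|Z^{\top}u\|^{2} - 1 \;=\; \tfrac{1}{d}\sum_{j\ne k} u_j u_k \langle \bz_j,\bz_k\rangle.
\]
This reduces both parts of the lemma to controlling the quadratic form of a centered, hollow random matrix, and the constant-norm hypothesis removes the diagonal fluctuation that usually dominates such bounds.

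For Part~1, I would run a standard covering argument on $S^{n-1}$. Fix $u$ and consider $S := Z^{\top}u = \sum_{j=1}^n u_j \bz_j \in \R^d$. Since for any unit $v$ the scalar $\langle S, v\rangle = \sum_j u_j \langle \bz_j, v\rangle$ is a sum of independent sub-gaussians with per-term $\psi_2$-norm at most $K$, rotation invariance of $\|\cdot\|_{\psi_2}$ gives $\|S\|_{\psi_2} \le C K$, hence $\|S\|^2$ is sub-exponential. A Hanson–Wright (or direct Bernstein) estimate then yields
\[
\Pr\Bigl(\bigl|\tfrac{1}{d}\|S\|^{2}-1\bigr| > \max(\delta,\delta^{2})\Bigr) \;\le\; 2\exp\bigl(-c_K d\, \delta^{2}\bigr),
\]
where $\E\|S\|^{2}=d$ follows from isotropy (and from the mean-zero assumption implicit in the intended application; otherwise one absorbs a deterministic rank-one shift from $\E[\bz_j]\E[\bz_k]^{\top}$). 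Union-bounding over a $\tfrac14$-net $\mathcal N \subset S^{n-1}$ of cardinality $\le 9^n$ and applying the approximation $\|A\|\le 2\max_{u\in\mathcal N}|u^{\top}A u|$ for symmetric $A$, with the choice $\delta = C_K\sqrt{n/d}+t/\sqrt d$ (so that $c_K d \delta^2 \gtrsim n+t^2$), yields Part~1.

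For Part~2, I would symmetrize and then apply a non-commutative Khintchine / Rudelson-type bound. By a standard decoupling argument (e.g.\ de~la Peña–Giné), the expected operator norm of $A := \frac{1}{d}ZZ^{\top}-I_n$ is comparable, up to an absolute constant, to that of the Rademacher-symmetrized matrix $\tilde A := \frac{1}{d}\sum_{j\ne k} \varepsilon_j \varepsilon_k \langle \bz_j,\bz_k\rangle\, e_j e_k^{\top}$, where $\{\varepsilon_j\}$ are i.i.d.\ signs independent of $Z$. Writing $\tilde A = \sum_j \varepsilon_j e_j v_j^{\top} + \text{transpose}$ with $v_j := \frac{1}{d}\sum_{k\ne j}\varepsilon_k \langle \bz_j,\bz_k\rangle e_k$ and conditioning on $Z$ and on the signs appearing in $v_j$, Rudelson's inequality yields
\[
\E\Bigl[\Bigl\|\sum_j \varepsilon_j e_j v_j^{\top}\Bigr\|\;\Big|\;Z\Bigr] \;\le\; C\sqrt{\log n}\,\Bigl\|\operatorname{diag}(\|v_j\|^{2})\Bigr\|^{1/2} \;=\; C\sqrt{\log n}\cdot \max_j \|v_j\|.
\]
Finally, $\|v_j\|^{2} = \frac{1}{d^{2}}\sum_{k\ne j}\langle \bz_j,\bz_k\rangle^{2}$, so taking expectations and invoking the definition of the incoherence parameter $m$ yields $\E\max_j\|v_j\|^{2}\le m/d$, giving the claimed $C\sqrt{m\log n / d}$.

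The main obstacle is Part~2: one must set up symmetrization/decoupling so that the Rademacher signs appear in a form compatible with Rudelson's inequality, and then recognize that the conditional variance matrix collapses to the scalar incoherence $m$. A secondary nuisance in Part~1 is the mean: isotropy alone does not force $\E[\bz_i]=0$, so one needs either to assume it (as in the calling theorem) or to absorb the resulting rank-one shift into the error term. Neither obstacle is deep, but both require care.
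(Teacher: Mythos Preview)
The paper does not prove this lemma at all: it is imported verbatim from \cite{vershynin2010introduction} (Theorems 5.58 and 5.62 there) and used as a black box, so there is no paper argument to compare against. Your sketch does follow the broad architecture of Vershynin's original proofs (net plus pointwise concentration for Part~1; symmetrization plus non-commutative Khintchine/Rudelson for Part~2), but each part contains a gap that is precisely the substantive content of Vershynin's argument.

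\textbf{Part 1.} The implication ``$\|S\|_{\psi_2}\le CK$, hence $\|S\|^2$ is sub-exponential, hence Bernstein/Hanson--Wright gives $\exp(-c_K d\delta^2)$'' does not hold. A sub-gaussian random vector $S\in\R^d$ need not have $\|S\|^2$ concentrating at rate $d$: take $S=g\cdot v_0$ with $g$ a scalar Gaussian and $v_0$ fixed. In your setting the coordinates of $S=\sum_j u_j\bz_j$ are \emph{not} independent, so ``direct Bernstein'' is inapplicable, and the scalar Hanson--Wright inequality requires a vector with independent sub-gaussian coordinates, which the stacked vector $(\bz_1,\ldots,\bz_n)\in\R^{nd}$ is not assumed to have. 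What is actually needed is control of the off-diagonal chaos $\sum_{j\ne k}u_ju_k\langle\bz_j,\bz_k\rangle$; Vershynin handles this via a decoupling step (comparison with $\sum_{j,k}u_ju_k\langle\bz_j,\bz'_k\rangle$ for an independent copy $\bz'$) followed by two conditional sub-gaussian tail bounds, and this is where the factor $d$ in the exponent genuinely comes from.

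\textbf{Part 2.} Your conditioning scheme collapses. In $\tilde A=\sum_j\varepsilon_j\,e_jv_j^\top$ the vector $v_j$ already contains \emph{every} other sign $(\varepsilon_k)_{k\ne j}$, so ``conditioning on $Z$ and on the signs appearing in $v_j$'' leaves only a single random sign per summand; Rudelson's inequality applied to one Rademacher variable is vacuous and cannot produce the $\sqrt{\log n}$. The fix, which is what Vershynin does, is to decouple the two index sets first: compare $A$ with $\frac{1}{d}\sum_{j\ne k}\varepsilon_j\varepsilon'_k\langle\bz_j,\bz_k\rangle\,e_je_k^\top$ using two \emph{independent} Rademacher sequences $\varepsilon,\varepsilon'$, condition on $Z$ and on $\varepsilon'$, and only then apply non-commutative Khintchine over $\varepsilon$. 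The conditional variance matrix is then genuinely $\mathrm{diag}\bigl(\frac{1}{d^2}\sum_{k\ne j}\langle\bz_j,\bz_k\rangle^2\bigr)$, whose expected operator norm is exactly $m/d$.
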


\begin{proposition}\label{prop:high_dim}
    Let $Z\in \R^{n\times d}$ be an $n\times d$ matrix for some $d\geq n\in\N$, whose rows $\bz_i$ are independent random isotropic vectors in $\R^d$ with $\norm{\bz_i}=\sqrt{d}$ a.s. For any $p\in\N$ let $K(p) := \max_{i\in[n]}\sup_{x\in\Sphere^{d-1}}\E_{\bz_i}[\abs{\langle \bz_i, x \rangle}^{p}]^{\frac{1}{p}}$. Then, 
    \begin{align*}
    \E\left[\norm{\frac{1}{d}Z Z^\top - I_n}_2\right] \leq \epsilon.
    \end{align*}
    where
    \begin{align*}
    \epsilon := \frac{C}{\delta}\sqrt{\frac{p}{\log(p)+1}\frac{n^{\frac{1}{p}}\max\left(n, n^\frac{1}{p}K(2p)^2\right) \log(n)}{d}}
    \end{align*}
    for some absolute Constance $C>0$.
\end{proposition}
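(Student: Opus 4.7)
\textbf{Proof proposal for Proposition~\ref{prop:high_dim}.}

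The plan is to apply the second part of Lemma~\ref{lem:versh_high_dim}, which reduces the task to bounding the incoherence parameter $m := \frac{1}{d}\E\max_{j\leq n}\sum_{k\neq j}\langle \bz_j,\bz_k\rangle^2$. Once we show $m \lesssim \frac{p}{\log(p+1)}\,n^{1/p}\max(n,\,n^{1/p}K(2p)^2)$, plugging into the bound $\E\|\tfrac{1}{d}ZZ^\top - I_n\|_2 \leq C\sqrt{m\log(n)/d}$ yields the stated $\epsilon$. So essentially all of the work lives in controlling $m$.

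To handle the maximum over $j$, I would use the standard moment trick: for non-negative random variables $Y_j := \sum_{k\neq j}\langle \bz_j,\bz_k\rangle^2$,
\[
\E\max_{j\in[n]} Y_j \;\leq\; \Bigl(\sum_{j=1}^n \E Y_j^p\Bigr)^{1/p} \;\leq\; n^{1/p}\max_{j}\bigl(\E Y_j^p\bigr)^{1/p}.
\]
Condition on $\bz_j$: since the $\bz_k$ for $k\neq j$ are independent of $\bz_j$ and of each other, and $\|\bz_j\|=\sqrt{d}$ almost surely, the variables $W_k := \langle \bz_j,\bz_k\rangle^2$ are independent nonnegative with conditional mean $\E[W_k\mid \bz_j]=\|\bz_j\|^2 = d$ (by isotropy of $\bz_k$), and conditional higher moments $\E[W_k^p\mid \bz_j] = \|\bz_j\|^{2p}\,\E[|\langle \bz_j/\|\bz_j\|,\bz_k\rangle|^{2p}] \leq d^p K(2p)^{2p}$. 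This is where the isotropy and the spherical assumption $\|\bz_j\|=\sqrt{d}$ work together cleanly.

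Next I would invoke a Rosenthal-type moment inequality for sums of independent nonnegative random variables (this is the ``hypercontractivity'' step mentioned in the paper): there is an absolute constant $C>0$ such that
\[
\Bigl(\E\Bigl[\Bigl(\sum_{k\neq j} W_k\Bigr)^{\!p}\;\Big|\;\bz_j\Bigr]\Bigr)^{1/p} \;\leq\; C\Bigl[\,\sum_{k\neq j}\E[W_k\mid \bz_j] \;+\; \tfrac{p}{\log(p+1)}\,\bigl(\textstyle\sum_{k\neq j}\E[W_k^p\mid \bz_j]\bigr)^{1/p}\,\Bigr].
\]
Plugging in the conditional-moment bounds above gives $(\E[Y_j^p\mid \bz_j])^{1/p} \leq C\bigl[nd + \tfrac{p}{\log(p+1)} n^{1/p} d\,K(2p)^2\bigr]$, a bound that is deterministic in $\bz_j$, so it passes through to $(\E Y_j^p)^{1/p}$. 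Multiplying by the earlier $n^{1/p}$ and dividing by $d$ yields
\[
m \;\leq\; C\,\tfrac{p}{\log(p+1)}\,n^{1/p}\max\bigl(n,\; n^{1/p}K(2p)^2\bigr),
\]
which is exactly $B(n,p)$ up to the absolute constant. Substituting into Lemma~\ref{lem:versh_high_dim}(2) and combining with Theorem~\ref{thm:gen_bound} closes the proof of Theorem~\ref{thm:high_dim} as well.

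The main obstacle I expect is locating and stating the correct Rosenthal-type inequality with the sharp $p/\log(p+1)$ prefactor (rather than, say, a crude $p$), since this is what drives the exponent in $B(n,p)$ and matches the proposition's stated rate. An alternative route, if that constant is inconvenient to cite, is to split $W_k = \E W_k + (W_k - \E W_k)$ and apply a signed Rosenthal inequality to the centered sum, then bound the variance term $\sum\var(W_k\mid \bz_j)$ by $\E[W_k^2\mid\bz_j]\leq d^2 K(4)^4 \leq d^2 K(2p)^4$; this gives the same structure of bound via the $\sqrt{p}$-subgaussian and $p/\log(p+1)$-heavy-tailed regimes of Rosenthal.
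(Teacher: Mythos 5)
Your proposal is correct and follows essentially the same route as the paper: reduce to bounding the incoherence parameter $m$ via the second part of Lemma~\ref{lem:versh_high_dim}, control the maximum via $\E\max_j Y_j \leq n^{1/p}\max_j (\E Y_j^p)^{1/p}$, and bound $\E[Y_j^p]^{1/p}$ with a Rosenthal inequality for independent nonnegative summands $\langle \bz_j,\bz_k\rangle^2$, using isotropy of $\bz_k$ and $\norm{\bz_j}=\sqrt d$ a.s.\ to compute the first and $p$-th moments (this is exactly the paper's Lemma~\ref{lem:gen_incoherence_bound} followed by Corollary~\ref{cor:incoherence_bound}). One small point in your favor: you correctly condition on $\bz_j$ before invoking Rosenthal so that the $W_k$ are genuinely independent and the conditional moment bounds are deterministic (hence lift unconditionally), whereas the paper's proof of Lemma~\ref{lem:gen_incoherence_bound} asserts unconditional independence of $\{D_{ij}\}_{j\ne i}$ for fixed $i$, which is only true conditionally on $\bz_i$.
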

\begin{proof}
Follows the bounds in \lemref{lem:versh_high_dim}, and plugging in the bound for the incoherence parameter $m$ from \corref{cor:incoherence_bound}.
\end{proof}

\begin{lemma}[\cite{vershynin2010introduction} Lemma 5.20] \label{lem:lem:norm_iso}
Let $\bz_1, \bz_2\in\R^d$ be independent isotropic random vectors, then $\E[\norm{\bz_1}^2]=d$ and $\E[\langle \bz_1, \bz_2 \rangle^2]=d$.
\end{lemma}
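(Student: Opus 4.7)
The plan is to prove the two identities in turn, using only linearity of expectation, the cyclic/trace identity for inner products, and the two equivalent formulations of isotropy given in Definition~\ref{def:isotropic}.

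For the first identity, I would rewrite $\norm{\bz_1}^2 = \bz_1^\top \bz_1 = \tr(\bz_1 \bz_1^\top)$, pass the trace through the expectation by linearity, and then invoke isotropy in the form $\E[\bz_1\bz_1^\top] = I_d$ to obtain
\begin{align*}
\E[\norm{\bz_1}^2] = \tr\!\left(\E[\bz_1\bz_1^\top]\right) = \tr(I_d) = d.
\end{align*}

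For the second identity, the natural move is to condition on $\bz_2$ and exploit the \emph{other} characterization of isotropy in Definition~\ref{def:isotropic}, namely that $\E[\langle \bz_1, v\rangle^2] = \norm{v}^2$ for every deterministic $v \in \R^d$. Since $\bz_1$ and $\bz_2$ are independent, conditioning on $\bz_2 = v$ leaves $\bz_1$ isotropic, so
\begin{align*}
\E\!\left[\langle \bz_1, \bz_2\rangle^2 \,\big|\, \bz_2\right] = \norm{\bz_2}^2.
\end{align*}
Taking expectations and applying the first identity then gives $\E[\langle \bz_1, \bz_2\rangle^2] = \E[\norm{\bz_2}^2] = d$.

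Neither step is really an obstacle; the only minor subtlety is being explicit that the two forms of isotropy in Definition~\ref{def:isotropic} are equivalent, so that one is free to use whichever is more convenient at each step. In particular, the second identity relies on the scalar form $\E[\langle \bz_1, v\rangle^2] = \norm{v}^2$ extending to a (deterministic) vector $v$ that we then randomize via the tower property, which is legitimate precisely because of independence of $\bz_1$ and $\bz_2$.
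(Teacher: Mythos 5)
Your proof is correct and is essentially the standard argument: the paper does not reprove this lemma but cites it directly from Vershynin's notes, where Lemma~5.20 is established by exactly the trace identity for the first part and the conditioning-on-$\bz_2$/tower-property argument for the second. Nothing to add.
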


\begin{lemma}\label{lem:gen_incoherence_bound}
    Let $\bz_1, \ldots, \bz_n\in \R^d$ be independent random vectors for some $n,d\in\N$. Then there exists some absolute constant $C>0$ s.t. the incoherence parameter
    \begin{align*}
    m:=\E\left[\frac{1}{d}\max_{i\leq n}\sum_{j\in[n], j\neq i}\langle \bz_i, \bz_j \rangle^2\right],
    \end{align*}
    satisfies for any $p > 1$
    % \begin{align*}
    %  m \leq C\frac{s}{\log(s)}n^{\frac{1}{s}}\max\left((n-1)\E\left[\langle \bz_1, \bz_2 \rangle^2\right], (n-1)^\frac{1}{s}\E\left[\langle \bz_1, \bz_2 \rangle^{2s}\right]^{\frac{1}{s}}\right).
    % \end{align*}
    \begin{align*}
    m \leq C\frac{p}{\log(p)}n^{\frac{1}{p}} \cdot \frac{1}{d}\max_{i\in[n]}\max\left(\sum_{j\in[n],j\neq i} \E\left[\langle \bz_i, \bz_j \rangle^2\right], \left(\sum_{j\in[n], j\neq i}\E\left[\langle \bz_i, \bz_j \rangle^{2p}\right]\right)^{\frac{1}{p}}\right)
    \end{align*}
\end{lemma}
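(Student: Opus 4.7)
My plan is to reduce the expected-maximum bound on $m$ to a per-index $L^p$-moment bound and then apply a Rosenthal-type inequality conditionally on $\bz_i$.

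Writing $S_i := \sum_{j \neq i}\langle \bz_i,\bz_j\rangle^2$, the elementary inequality $\max_i S_i \leq \paren{\sum_i S_i^p}^{1/p}$, valid for non-negative quantities, together with Jensen's inequality gives
\begin{align*}
d\cdot m \;=\; \E\sqprn{\max_i S_i} \;\leq\; \paren{\sum_i \E[S_i^p]}^{1/p} \;\leq\; n^{1/p}\,\max_i \|S_i\|_p .
\end{align*}
This accounts for the $n^{1/p}$ prefactor in the statement. It therefore remains to show, for each fixed $i$, a bound of the form $\|S_i\|_p \leq C\,(p/\log p)\,\max\paren{\sum_j \E[Y_j],\, (\sum_j \E[Y_j^p])^{1/p}}$, where $Y_j := \langle \bz_i,\bz_j\rangle^2$ and the sums are taken over $j \neq i$.

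The crucial observation is that although the summands $Y_j$ are not independent, they are \emph{conditionally independent given $\bz_i$}, since each $Y_j$ depends only on the independent vector $\bz_j$. I would therefore apply Rosenthal's inequality for sums of independent non-negative random variables (with the sharp Pinelis constant, of order $p/\log p$) conditionally on $\bz_i$, obtaining
\begin{align*}
\paren{\E[S_i^p \mid \bz_i]}^{1/p} \;\leq\; C\,\frac{p}{\log p}\,\max\paren{V_i,\; \paren{\sum_j \E[Y_j^p\mid \bz_i]}^{1/p}},
\end{align*}
where $V_i := \sum_j \E[Y_j\mid \bz_i] = \bz_i^\top M_i \bz_i$ with $M_i := \sum_{j \neq i}\E[\bz_j \bz_j^\top]$. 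Raising to the $p$-th power, taking unconditional expectation, and using $\max(a,b)^p \leq a^p + b^p$ reduces the task to controlling $\|V_i\|_p$; the other term $(\sum_j \E[Y_j^p])^{1/p}$ already matches what is required.

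The main obstacle is therefore to bound the $L^p$ norm of the quadratic form $V_i = \bz_i^\top M_i \bz_i$ by $C\,(p/\log p)\,\max\paren{\E V_i,\, (\sum_j \E[Y_j^p])^{1/p}}$. The naive Jensen bound $\|V_i\|_p \leq \|S_i\|_p$ coming from $V_i = \E[S_i\mid \bz_i]$ is circular, and a direct power-mean estimate such as $\|V_i\|_p \leq n^{1-1/p}(\sum_j \E[Y_j^p])^{1/p}$ loses a factor of $n^{1-1/p}$. I would resolve this by exploiting the bilinear structure: introduce independent copies $\bz_j'$ of the $\bz_j$, write $V_i = \E_{\bz'}\sqprn{\sum_j \langle \bz_i,\bz_j'\rangle^2 \mid \bz_i}$, and reapply the conditional Rosenthal step symmetrically to the decoupled sum, so that the "bad" conditional mean term now becomes a deterministic sum of the form $\sum_j \E[Y_j]$. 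Substituting the resulting estimate for $\|V_i\|_p$ back into the bound from the second step and combining with the first step produces the lemma.
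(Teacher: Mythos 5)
You correctly identify the key subtlety that the paper's own proof glosses over: for fixed $i$, the terms $D_{ij} = \langle\bz_i,\bz_j\rangle^2$, $j\neq i$, are \emph{not} unconditionally independent (they all share the random vector $\bz_i$), so Rosenthal's inequality can only be invoked after conditioning on $\bz_i$. After that conditioning, the first branch of Rosenthal's maximum is the random quadratic form $V_i = \sum_{j\neq i}\E[D_{ij}\mid\bz_i] = \bz_i^\top M_i\bz_i$ with $M_i = \sum_{j\neq i}\E[\bz_j\bz_j^\top]$, and what survives after de-conditioning is the $L^p$ norm $\E[V_i^p]^{1/p}$, not the unconditional mean $\E[V_i]=\sum_j\E[D_{ij}]$ appearing in the statement. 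Your proposed decoupling repair does not close this gap: writing $V_i = \E_{\bz'}\!\left[\sum_j\langle\bz_i,\bz_j'\rangle^2\mid\bz_i\right]$ and ``symmetrically'' conditioning the decoupled sum the other way merely produces another random quadratic form $\sum_j (\bz_j')^\top\E[\bz_i\bz_i^\top]\bz_j'$, which is deterministic only under extra structure on the $\bz_j$. As written, the final substitution step is not justified.

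In fact the lemma is \emph{false} as stated for arbitrary independent vectors, so the step you are trying to supply cannot exist. Take $d=1$ and $\bz_1,\dots,\bz_n$ i.i.d.\ with $\bz_i=1$ with probability $c/n$ and $\bz_i=0$ otherwise. Then $D_{ij}=\bz_i\bz_j$, so $\E[D_{ij}]=\E[D_{ij}^p]=c^2/n^2$, and with $N=\sum_i\bz_i$ one has $\max_i\sum_{j\neq i}D_{ij}=\max(N-1,0)$, whence $m=\E[\max(N-1,0)]\geq c-1$. Meanwhile the claimed upper bound is of order $\frac{p}{\log p}\, n^{1/p}\big((n-1)c^2/n^2\big)^{1/p}\asymp\frac{p}{\log p}\,c^{2/p}$ once $c<\sqrt{n}$, which for fixed $p\geq 3$ and $c\to\infty$ (taking $n>c^2$ throughout) is eventually smaller than $c-1$ for any absolute constant $C$. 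The lemma \emph{does} hold, and the paper's intended proof goes through essentially verbatim after conditioning on $\bz_i$, under the hypotheses that are actually in force downstream (Corollary~\ref{cor:incoherence_bound} is applied with all $\bz_j$ isotropic and $\|\bz_i\|=\sqrt{d}$ almost surely), since then $M_i=(n-1)I_d$ and $V_i=(n-1)\|\bz_i\|^2=(n-1)d$ is deterministic, so the conditional Rosenthal bound coincides with the unconditional one. Your instinct that the proof needs repair is right; the correct fix is to strengthen the hypotheses of the lemma rather than to decouple.
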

\begin{proof}
    Let $D_{i,j}:=\langle \bz_i, \bz_j \rangle^2$. For any $p>1$, 
    \begin{align*}
        d\cdot m = & \E\left[\max_{i\leq n} \sum_{j\in[n], j\neq i} D_{ij}\right] 
        \leq \E\left[\max_{i\leq n} \left(\sum_{j\in[n], j\neq i} D_{ij}\right)^{p}\right]^{\frac{1}{p}} \\
        \leq & \left(\sum_{i=1}^n\E\left[\left(\sum_{j\in[n], j\neq i} D_{ij}\right)^{p}\right]\right)^{\frac{1}{p}}  
        = n^{\frac{1}{p}} \max_{i\leq n}\E\left[\left(\sum_{j\in[n], j\neq i}^n D_{ij}\right)^{p}\right]^{\frac{1}{p}}.
    \end{align*}
    For any fixed $i\in[n]$, $\{D_{ij}\}_{j\in[n],j\neq i}$ are independent and non-negative random variables, so by Rosenthal's inequality (see for example \cite{johnson1985best} or \cite{de2012decoupling}[Theorem 1.5.9]) there exists some absolute constant $C>0$ s.t. for any $i\in[n]$,
    \begin{align*}
        \E\left[\left(\sum_{j\in[n],j\neq i} D_{ij}\right)^{p}\right]^{\frac{1}{p}}
        \leq & C \frac{p}{\log(p)} \max\left(\sum_{j\in[n],j\neq i} \E\left[D_{ij}\right], \left(\sum_{j\in[n],j\neq i}\E\left[D_{ij}^{p}\right]\right)^{\frac{1}{p}}\right), 
        % \\
        % = & C \frac{s}{\log(s)} \max\left((n-1)\E\left[D_{i*}\right], (n-1)^\frac{1}{s}\E\left[D_{12}^{s}\right]^{\frac{1}{s}}\right).
    \end{align*}
    which completes the proof.
    % This yields
    % \begin{align*}
    % m \leq C\frac{s}{\log(s)}n^{\frac{1}{s}}\max\left((n-1)\E\left[D_{12}\right], (n-1)^\frac{1}{s}\E\left[D_{12}^{s}\right]^{\frac{1}{s}}\right).
    % \end{align*}
\end{proof}

\begin{corollary}\label{cor:incoherence_bound}
    Let $\bz_1, \ldots, \bz_n\in \R^d$ be independent and isotropic random vectors for some $n,d\in\N$. 
    For any $p\in\N$ let $K(p) := \max_{i\in[n]}\sup_{x\in\Sphere^{d-1}}\E_{\bz_i}[\abs{\langle \bz_i, x \rangle}^{p}]^{\frac{1}{p}}$.
    Then there exists some absolute constant $C>0$ s.t. the incoherence parameter
    \begin{align*}
    m:=\frac{1}{d}\E\left[\max_{i\leq n}\sum_{j\in[n], j\neq i}\langle \bz_i, \bz_j \rangle^2\right],
    \end{align*}
    satisfies
    \begin{align*}
    m \leq C\frac{p}{\log(p)+1}n^{\frac{1}{p}}\max\left(n, n^\frac{1}{p}K(2p)^2\right).
    \end{align*}
\end{corollary}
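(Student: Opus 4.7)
The plan is to invoke \lemref{lem:gen_incoherence_bound} with the given $p$ and then control the two quantities inside the inner maximum separately. Writing
\[
A_i := \sum_{j\in[n],j\neq i}\E\!\left[\langle \bz_i,\bz_j\rangle^2\right] \qquad\text{and}\qquad B_i := \sum_{j\in[n],j\neq i}\E\!\left[\langle \bz_i,\bz_j\rangle^{2p}\right],
\]
the lemma reduces the corollary to the two per-$i$ bounds $\tfrac{1}{d}A_i\leq n$ and $\tfrac{1}{d}B_i^{1/p}\leq n^{1/p}K(2p)^2$, together with absorbing the discrepancy between $\log p$ and $\log(p)+1$ into the constant.

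For $A_i$, I would condition on $\bz_j$ and exploit the isotropy of $\bz_i$: for $j\neq i$, $\E[\langle \bz_i,\bz_j\rangle^2\mid \bz_j] = \bz_j^\top\E[\bz_i\bz_i^\top]\bz_j = \|\bz_j\|^2$, so taking unconditional expectation together with $\E\|\bz_j\|^2 = d$ (\lemref{lem:lem:norm_iso}) gives $\E[\langle\bz_i,\bz_j\rangle^2] = d$. Hence $A_i\leq (n-1)d$ and the first bound follows immediately.

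For $B_i$, I would again condition on $\bz_j$, but now factor $\langle \bz_i,\bz_j\rangle = \|\bz_j\|\cdot\langle \bz_i,\bz_j/\|\bz_j\|\rangle$ so that the second factor involves a unit vector. By independence of $\bz_i$ and $\bz_j$ and the definition of $K(2p)$, this yields
\[
\E\!\left[\langle \bz_i,\bz_j\rangle^{2p}\,\middle|\,\bz_j\right] \;\leq\; \|\bz_j\|^{2p}\,K(2p)^{2p}.
\]
At this step I would invoke $\|\bz_j\|=\sqrt{d}$ almost surely---the implicit assumption inherited from the setting of \propref{prop:high_dim} in which this corollary is applied---to obtain $\E[\langle\bz_i,\bz_j\rangle^{2p}]\leq d^p K(2p)^{2p}$, whence $B_i \leq n d^p K(2p)^{2p}$ and $\tfrac{1}{d}B_i^{1/p}\leq n^{1/p}K(2p)^2$.

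Plugging these two per-$i$ bounds into \lemref{lem:gen_incoherence_bound} gives $m \leq C'\tfrac{p}{\log p}\,n^{1/p}\max(n,n^{1/p}K(2p)^2)$, and replacing $\log p$ by $\log(p)+1$ in the denominator only costs a multiplicative constant while handling the $p=1$ corner case cleanly. I do not anticipate any real obstacle beyond notational housekeeping; the only genuinely delicate point is the reliance on the almost-sure norm constraint $\|\bz_j\|=\sqrt{d}$, since without it the conditioning step would leave a factor $\E[\|\bz_j\|^{2p}]$ that isotropy alone cannot bound by $d^p$, and a Jensen-type detour via $(\sum_k z_{jk}^2)^p \leq d^{p-1}\sum_k z_{jk}^{2p}$ would degrade the exponent of $K(2p)$ from $2$ to $4$.
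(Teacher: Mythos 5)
Your proof is essentially identical to the paper's. The paper proves $\E[\langle\bz_i,\bz_j\rangle^2]=d$ directly from \lemref{lem:lem:norm_iso}, and for the $2p$-th moment it conditions on $\bz_i$ rather than $\bz_j$, writing $\E[\langle\bz_i,\bz_j\rangle^{2p}]^{1/p}=\E_{\bz_i}[\E_{\bz_j}[\langle\bz_i,\bz_j\rangle^{2p}]]^{1/p}\leq\E_{\bz_i}[(K(2p)\norm{\bz_i})^{2p}]^{1/p}=K(2p)^2\cdot d$; by independence and symmetry this is the same calculation as yours, and both proofs then plug into \lemref{lem:gen_incoherence_bound} and absorb the $\log(p)$ versus $\log(p)+1$ discrepancy (and the $p=1$ case) into the constant. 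You are also right to flag that the last equality requires $\norm{\bz_i}=\sqrt{d}$ a.s.\ -- the paper omits this hypothesis from the corollary statement but uses it implicitly, and it is indeed inherited from the setting of \propref{prop:high_dim} where the corollary is invoked; your remark that without it the exponent of $K(2p)$ would degrade is a fair diagnosis of why the a.s.\ constraint is load-bearing rather than cosmetic.
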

\begin{proof}
    Since $\bz_i$ is isotropic, for any $\x\in\R^d$ it holds by \lemref{lem:lem:norm_iso} that $\E_{\x}[\langle \bz_i, \x\rangle ^2] = \norm{\bz_i}^2$. In particular, it holds that for any $i\in [n]$ that
    \begin{align}\label{eq:helper_gram1}
    \sum_{j\in[n], i\neq j}\E\left[\langle \bz_i, \bz_j \rangle^2\right] = nd.
    \end{align}
    For $p=1$ the claim follows directly from this by bounding the maximum over $i\in[n]$ with the sum.
    From now on, we assume $p>1$. By the definition of $K(p)$, it holds that 
    \begin{align}\label{eq:helper_gram2}
        \E\left[\langle \bz_i, \bz_j \rangle ^{2p} \right]^{\frac{1}{p}} = &
        \E_{\bz_i}\left[\E_{\bz_j}\left[\langle \bz_i, \bz_j \rangle ^{2p} \right]\right]^{\frac{1}{p}}
        \leq \E_{\bz_i}\left[\left(K(2p)\norm{\bz_i}\right)^{2p}\right]^{\frac{1}{p}} \nonumber \\
        = & K(2p)^2 \cdot d. 
    \end{align}
    The proof now follows from plunging \eqref{eq:helper_gram1} and \eqref{eq:helper_gram2} into \lemref{lem:gen_incoherence_bound}.
\end{proof}

\vskip 0.2in
\bibliography{references}

\end{document}